\documentclass[11pt]{amsart}

\usepackage[T1]{fontenc}
\usepackage[utf8]{inputenc}
\usepackage{lmodern}
\usepackage{amsmath,amssymb,mathtools}
\usepackage{microtype}
\usepackage{xurl}
\usepackage[hidelinks]{hyperref}

\allowdisplaybreaks

\newtheorem{theorem}{Theorem}[section]
\newtheorem{proposition}[theorem]{Proposition}
\newtheorem{lemma}[theorem]{Lemma}
\newtheorem{corollary}[theorem]{Corollary}
\theoremstyle{definition}
\newtheorem{definition}[theorem]{Definition}
\newtheorem{conjecture}[theorem]{Conjecture}
\newtheorem{openproblem}[theorem]{Open problem}
\theoremstyle{remark}
\newtheorem{remark}[theorem]{Remark}
\newtheorem{example}[theorem]{Example}

\newcommand{\F}{\mathbb F}
\newcommand{\A}{\mathbb A}
\newcommand{\Pone}{\mathbb P^1}
\newcommand{\trdeg}{\operatorname{trdeg}}
\newcommand{\Spec}{\operatorname{Spec}}
\newcommand{\Jac}{\operatorname{Jac}}
\newcommand{\Fun}{\operatorname{Fun}}
\newcommand{\LT}{\operatorname{LT}}
\newcommand{\id}{\operatorname{id}}

\title[Polynomial representatives over finite fields]
{Polynomial representatives of finite-field maps:\\
a sharp dimensional dichotomy}

\author{Stefan Bara\'nczuk}
\address{Adam Mickiewicz University, Pozna\'n, Poland}
\email{stefbar@amu.edu.pl}
\urladdr{https://orcid.org/0000-0003-4198-9241}

\author{Tomasz \'Slusarski}
\address{}
\email{tomasz.slusarski@amu.edu.pl}

\subjclass[2020]{Primary 11T06; Secondary 14R10, 14R15}
\keywords{finite fields, polynomial maps, algebraic independence,
polynomial representatives, Artin--Schreier equations}

\hypersetup{
  pdftitle={Polynomial representatives of finite-field maps: a sharp dimensional dichotomy},
  pdfauthor={Stefan Baranczuk and Tomasz Slusarski},
  pdfkeywords={finite fields, polynomial maps, algebraic independence, polynomial representatives, Artin--Schreier equations}
}

\begin{document}

\begin{abstract}
Let $k=\F_q$.  A polynomial representative of a finite-set map is a tuple of
polynomials inducing that map on the rational-point grid.  We prove a sharp
distinction between a finite-set map and the geometry of its representatives.
If $n=1$ or $n=2$, every polynomial representative of a permutation of $k^n$
has algebraically independent coordinates.  If $n\geq3$, every set map
$k^n\to k^n$ has both an algebraically independent and an algebraically
dependent representative; the latter may be chosen to satisfy
\[
  F_2^q-F_2=(F_1^q-F_1)F_3.
\]
More generally, every map $k^m\to k^n$ has an algebraically independent
representative exactly when $n\leq m$, while every such map has a dependent
representative when $n\geq3$.  The dependent construction combines an
Artin--Schreier interpolation theorem, producing prescribed values by
polynomials $A,B$ with $A^q-A\mid B^q-B$, with a three-coordinate suspension.
For the identity on $k^3$, the scheme-theoretic image may be chosen to be
exactly
\[
  V^q-V=(U^q-U)W,
\]
a smooth geometrically integral rational surface.  We also establish
low-degree and extension-field criteria forcing algebraic independence.  An
exact exhaustive computation additionally proves that every $2$-reduced
representative of a permutation of $\F_2^3$ has algebraically independent
coordinates.
\end{abstract}

\maketitle

\section{Introduction}\label{sec:introduction}

Let $k=\F_q$ be a finite field.  A tuple
\[
  F=(F_1,\ldots,F_n)\in k[x_1,\ldots,x_m]^n
\]
defines both a morphism
\[
  F:\A_k^m\longrightarrow\A_k^n
\]
and a map of finite sets
\[
  F:k^m\longrightarrow k^n.
\]
The morphism depends on the actual coordinate polynomials, whereas the
finite-set map depends only on their residue classes modulo the grid ideal
\[
  I_{q,m}=(x_1^q-x_1,\ldots,x_m^q-x_m).
\]
We call any polynomial tuple inducing a prescribed finite-set map a
\emph{polynomial representative}.  The distinction between the finite
function and the geometry of a chosen representative is the central issue of
this paper.

Maubach and Willems introduced mock polynomial automorphisms while studying
low-degree Keller maps over finite fields.  Their published
Conjecture~2.3 is, however, stated for an arbitrary polynomial tuple inducing
a bijection of $\F_q^n$; no Keller hypothesis appears in the conjecture
\cite[p.~480, Conjecture~2.3]{MaubachWillems2014}.  In our notation it reads
as follows.

\begin{conjecture}[Maubach--Willems statement]\label{conj:MW}
Let $k=\F_q$ and let
\[
  F=(F_1,\ldots,F_n)\in k[x_1,\ldots,x_n]^n.
\]
If the induced map $F:k^n\to k^n$ is bijective, then
$F_1,\ldots,F_n$ are algebraically independent over $k$.
\end{conjecture}

Algebraic independence of $n$ coordinates is equivalent to dominance of the
associated morphism $\A_k^n\to\A_k^n$.  Thus
Conjecture~\ref{conj:MW} asks whether bijectivity on one rational-point grid
forces global dominance.

Polynomial functions and permutation polynomials over finite fields have a
substantial classical literature; standard references include
\cite{LidlNiederreiter1997,MullenPanario2013}, and a survey of permutation
polynomials is \cite{Hou2015}.  General degree questions for interpolation
on affine and projective finite spaces have also been studied
\cite{HellusWaldi2015}.  Our problem is different: the finite values are held
fixed while the chosen representative is varied, and the question is whether
the dimension of its scheme-theoretic image can change.

Maubach and Willems proved that algebraic dependence forces the formal
Jacobian determinant to vanish \cite[Lemmas~2.4--2.5]{MaubachWillems2014};
therefore Keller maps already satisfy the conclusion of
Conjecture~\ref{conj:MW}.  Speyer gave the normalization and rational-curve
point-count argument over $\F_2$ in dimension two.  He also observed that a
higher-dimensional counterexample could arise from a polynomial map with
lower-dimensional image that remains injective on the finite grid, followed
by interpolation back to the desired finite function \cite{Speyer2014}.  We
use the same geometric obstruction uniformly over all finite fields and
realize the proposed higher-dimensional mechanism for every prescribed
finite-set map.

There is a related but distinct space-filling phenomenon.  Katz constructed
smooth strict subvarieties containing every rational point of their ambient
affine space, using Artin--Schreier equations; some auxiliary statements in
the original paper were later corrected \cite{Katz1999,Katz2001}.  Three
levels should be distinguished: a subvariety containing all ambient rational
points, a polynomial parametrization of such a subvariety, and a polynomial
representative of an arbitrarily prescribed finite function whose image is
constrained to that subvariety.  The third is the representative-theoretic
advance established here.

Our main result gives both a general source--target statement and its sharp
square-dimensional consequence.

\begin{theorem}[Polynomial representative theorem]\label{thm:main}
Let $k=\F_q$, and let $m,n\geq1$.
\begin{enumerate}
  \item Every set map $\varphi:k^m\to k^n$ has an algebraically independent
  polynomial representative if and only if $n\leq m$.
  \item If $n\geq3$, every set map $\varphi:k^m\to k^n$ has an algebraically
  dependent polynomial representative satisfying
  \begin{equation}\label{eq:main-relation}
    F_2^q-F_2=(F_1^q-F_1)F_3.
  \end{equation}
  \item In the square case $m=n$, every polynomial representative of a
  permutation has algebraically independent coordinates when $n=1,2$.
  When $n\geq3$, every set map $k^n\to k^n$ has both an algebraically
  independent and an algebraically dependent representative.
\end{enumerate}
\end{theorem}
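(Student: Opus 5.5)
The three parts call for different tools. Part~(1) is a leading-form argument, part~(2) reduces to the Artin--Schreier interpolation theorem announced in the abstract followed by a correction of the third coordinate, and part~(3) combines part~(1) with a point count on a rational curve.

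\emph{Part (1).} For the \emph{only if} direction, if $n>m$ then any $n$ elements of $k[x_1,\dots,x_m]$ have transcendence degree at most $m<n$, so they are dependent. For $n\le m$, I would take any representative $G=(G_1,\dots,G_n)$ of $\varphi$ and modify it by elements of $I_{q,m}$:
\[
F_i=G_i+(x_i^q-x_i)^d,\qquad d\gg \max_i\deg G_i .
\]
Then $F$ still represents $\varphi$, and the top homogeneous component of $F_i$ is $x_i^{qd}$. Suppose $H(F_1,\dots,F_n)=0$ with $H\neq0$. Give $Y_i$ the weight $\deg F_i=qd$ and take the top weighted component $H^{\mathrm{top}}$ of $H$. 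Then $H^{\mathrm{top}}(x_1^{qd},\dots,x_n^{qd})=0$, which is impossible because these monomials are algebraically independent. Hence $F$ is independent.

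\emph{Part (2).} I would first reduce to the Artin--Schreier interpolation theorem: for any $a,b:k^m\to k$ there exist $A,B$ representing $a,b$ with $A^q-A\mid B^q-B$. Given $\varphi=(\varphi_1,\dots,\varphi_n)$, apply it with $a=\varphi_1$, $b=\varphi_2$, and write $B^q-B=(A^q-A)C_0$. The function induced by $C_0$ need not equal $\varphi_3$, so I would correct $B$ by replacing it with
\[
B'=B+(A^q-A)E .
\]
Since $A^q-A$ vanishes on $k^m$, $B'$ still represents $\varphi_2$. Moreover $B'^q-B'=(A^q-A)C$ with
\[
C=C_0+(A^q-A)^{q-1}E^q-E,
\]
and on the grid $C$ takes the values $C_0-E$. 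Choosing $E$ to represent $C_0-\varphi_3$ makes $C$ represent $\varphi_3$. Set $F_1=A$, $F_2=B'$, $F_3=C$, and let $F_4,\dots,F_n$ be arbitrary representatives of the remaining coordinates. Then $F$ satisfies \eqref{eq:main-relation}, which is a nontrivial relation because $Y^q-Y-(X^q-X)Z\neq0$. The real content is the interpolation theorem itself, and I expect it to be the main obstacle. The naive choice $B=g(A)$ with $g\in k[t]$ does give divisibility, since $g(A)^q=g(A^q)$, but it forces $b$ to factor through $a$. The construction must therefore decouple the values, for instance by building $A$ so that $A^q-A$ splits into factors that individually divide suitable $B^q-B$. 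I would try to prove this by induction on the support of $b$.

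\emph{Part (3).} The case $n\ge3$ follows by combining (1) with $m=n$ and (2). For $n=1$, a permutation of $k$ is nonconstant, so $F_1\notin k$ and hence $F_1$ is transcendental. For $n=2$, suppose $F$ is dependent. The closure of the image of $\A^2\to\A^2$ is irreducible of dimension at most one; it cannot be a point, because $F$ takes $q^2>1$ values, so it is a curve $C$. Since $\A^2$ is normal, $F$ factors through the normalization $\tilde C$, and hence $F(k^2)\subseteq \nu(\tilde C(k))$. The function field $k(\tilde C)$ is a transcendence-degree-one subfield of $k(x,y)$ containing $k$, so by the generalized L\"uroth theorem it is purely transcendental. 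Thus $\tilde C$ is a smooth affine rational curve, an open subset of $\Pone_k$, and so $|\tilde C(k)|\le q+1$. This gives $q^2=|F(k^2)|\le q+1$, a contradiction for every $q\ge2$. This is Speyer's argument run uniformly over all $\F_q$.
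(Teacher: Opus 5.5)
Parts (1) and (3) of your proposal are correct and follow the paper. Your top-weight argument is the paper's leading-monomial proof for $G_i=f_i+(x_i^q-x_i)^N$. For $n=2$, quoting the generalized L\"uroth theorem over the perfect field $\F_q$ is a legitimate shortcut for Lemma~\ref{lem:dominated-curve}; the paper reproves that rationality statement by passing to $\overline{k}$, restricting to a line, and using the rational point $h(0)$. Your correction $B'=B+(A^q-A)E$ with $E=C_0-T_3$ reproduces exactly the suspension of Lemma~\ref{lem:suspension}. The genuine gap is that you reduce part (2) to the Artin--Schreier interpolation theorem (Theorem~\ref{thm:AS-interpolation}) and never prove it. That theorem is the substantive step. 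Without it, nothing produces a representative satisfying \eqref{eq:main-relation} once $\varphi_2$ does not factor through $\varphi_1$.

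Your proposed induction on the support of $b$ does not reach the real difficulty. For fixed $A$ the admissible $B$ form an $\F_q$-subspace, because $B\mapsto B^q-B$ is $\F_q$-linear. So the induction only reduces the problem to realizing every point indicator with one common $A$, and everything hinges on choosing that $A$. The choice is genuinely constrained. The factors $B-c'$ $(c'\in k)$ of $B^q-B$ are pairwise coprime, so every irreducible factor of $A-c$ divides a single $B-c'$. Hence no irreducible component of $\{A=c\}$ can contain two grid points at which $b$ differs. With $A=x_1$ and $m\geq2$, for instance, only functions factoring through $x_1$ are attainable, so no point indicator can be added at all. The missing idea is to choose $A$ so that $A^q-A=\Delta E$, where $\Delta$ is a product of pairwise comaximal separators, one through each grid point and vanishing at no other, and $E$ is comaximal with $\Delta$. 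The paper proceeds as follows. It builds $\rho_i=1-\Pi_{i-1}\delta_i$, so that $\rho_i(P_i)=0$ and $\rho_i(P_j)=1$ for $j\neq i$. It chooses $A\equiv\alpha_i+\rho_i\pmod{\rho_i^2}$ by the Chinese remainder theorem, which gives $D=A^q-A\equiv-\rho_i\pmod{\rho_i^2}$ and hence $D=\Delta E$ with $(E,\Delta)=R$. Then $B=-\sum_i\beta_iD/\rho_i$ satisfies $B\equiv\beta_i\pmod{\rho_i}$ and $E\mid B$. This yields $\Delta\mid B^q-B$ and $E\mid B^q-B$, hence $D\mid B^q-B$. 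The $\rho_i$ term in $A$ also keeps $A$ nonconstant when $\varphi_1$ is constant. This is necessary, since $A^q-A=0$ would force $B$ to be constant.
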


In particular, the statement in Conjecture~\ref{conj:MW} is true in dimensions
one and two and false over every finite field in every dimension at least
three, already for the identity permutation.

The theorem is not merely the existence of one high-degree pathological
representative.  The dependent construction works for every prescribed
finite function, the same phenomenon is impossible for permutations in
dimensions one and two, and in dimension three the image can be a smooth
geometrically integral surface whose defining ideal is determined exactly.
Thus the result describes a universal and sharply dimension-dependent failure
of polynomial geometry to be intrinsic to a finite function.

The dependent construction has two steps.  First, for arbitrary prescribed
pairs
\[
  (\alpha_P,\beta_P)\in k^2,
  \qquad P\in k^m,
\]
we construct $A,B,C\in k[x_1,\ldots,x_m]$ such that
\begin{equation}\label{eq:intro-AS}
  A(P)=\alpha_P,
  \qquad
  B(P)=\beta_P,
  \qquad
  B^q-B=(A^q-A)C.
\end{equation}
The proof uses pairwise comaximal hypersurfaces, one through each rational
point, and first-order Chinese-remainder interpolation.  Second, in any
commutative $k$-algebra, put $D=A^q-A$ and define
\begin{equation}\label{eq:intro-suspension}
  F_1=A,
  \qquad
  F_2=B+D(C-T),
  \qquad
  F_3=T+D^{q-1}(C-T)^q.
\end{equation}
A direct calculation gives \eqref{eq:main-relation}.  At every rational point
where $A$ takes a value in $k$, the correction factor $D$ vanishes.  Hence the
suspension imposes a global relation without changing the prescribed
finite-set map.

For the identity on $k^3$, the construction begins in $k[x,y]$ and introduces
$z$ only in the suspension step.  We prove that the coordinate kernel is
exactly
\[
  \bigl(V^q-V-(U^q-U)W\bigr).
\]
The scheme-theoretic image is therefore a smooth geometrically integral
$k$-rational surface containing every rational point of $\A_k^3$.

We also prove two positive criteria beyond the low-dimensional obstruction.
The effective annihilator bound of Beecken, Mittmann, and Saxena
\cite[Corollary~6]{BeeckenMittmannSaxena2013} implies that a surjective square
map with coordinate degrees at most $d$ has algebraically independent
coordinates whenever $q>d^{n-1}$.  Moreover, a fixed polynomial map that is
surjective over finite extensions of unbounded cardinality has algebraically
independent coordinates.

For the canonical reduced-representative variant, we settle the smallest
three-dimensional case by an exact exhaustive computation: every
$2$-reduced representative of a permutation of $\F_2^3$ has algebraically
independent coordinates.  The computation enumerates all $8!$ permutations,
uses the formal Jacobian to settle $38{,}976$ cases, and identifies the
remaining $1{,}344$ cases as one affine postcomposition orbit of an explicitly
proved independent representative.  Exact source code and output accompany
the manuscript.

The principal new content is the divisibility-constrained Artin--Schreier
interpolation theorem, its use to construct representatives of every finite
map with one fixed algebraic relation, the resulting sharp square-dimensional
dichotomy, and the exact image realization.  We do not claim novelty for the
finite-grid algebra, the Jacobian implication, Speyer's dimension-two method,
or the elementary leading-term construction of independent representatives.
The relationship with Katz's space-filling varieties is geometric rather than
an antecedent for the universal prescribed-function theorem.

The paper is organized as follows.  Section~\ref{sec:grid} develops the
finite-grid and Jacobian preliminaries.  Section~\ref{sec:low-dimensional}
proves the low-dimensional obstruction through a general $q+1$ image bound.
Sections~\ref{sec:interpolation} and~\ref{sec:representatives} give the
interpolation, suspension, and universal representative theorems.
Section~\ref{sec:surface} identifies the exact image surface.
Section~\ref{sec:further-criteria} proves the degree and extension-field
criteria.  Section~\ref{sec:open} settles the smallest reduced-representative
case and records the remaining limitations and open problems.
Appendix~\ref{sec:compact} gives a compact characteristic-two formula.

\section{Polynomial representatives and finite-grid algebra}\label{sec:grid}

Throughout the paper,
\[
  k=\F_q,
  \qquad q=p^e,
  \qquad p=\operatorname{char}k,
\]
and for each $m\geq1$ we write
\[
  R_m=k[x_1,\ldots,x_m].
\]

\subsection{Algebraic independence and reduced representatives}

\begin{definition}
Let $\varphi:k^m\to k^n$ be a set map.  A tuple
\[
  F=(F_1,\ldots,F_n)\in R_m^n
\]
is a \emph{polynomial representative} of $\varphi$ if
$F(P)=\varphi(P)$ for every $P\in k^m$.
\end{definition}

\begin{definition}
Polynomials $f_1,\ldots,f_n\in R_m$ are \emph{algebraically independent over
$k$} if the homomorphism
\[
  k[U_1,\ldots,U_n]\longrightarrow R_m,
  \qquad U_i\longmapsto f_i,
\]
is injective.  Their transcendence degree is denoted
\[
  \trdeg_k k[f_1,\ldots,f_n].
\]
\end{definition}

\begin{lemma}[Finite-grid ideal]\label{lem:grid}
Let
\[
  I_{q,m}=(x_1^q-x_1,\ldots,x_m^q-x_m)\subset R_m.
\]
Then:
\begin{enumerate}
  \item $I_{q,m}$ is the ideal of all polynomials that vanish on $k^m$;
  \item every class in $R_m/I_{q,m}$ has a unique representative $h$
  satisfying
  \[
    \deg_{x_i}h<q
    \qquad(1\leq i\leq m);
  \]
  \item evaluation induces a $k$-algebra isomorphism
  \[
    R_m/I_{q,m}\cong\Fun(k^m,k).
  \]
\end{enumerate}
\end{lemma}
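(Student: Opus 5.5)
The plan is to prove the three parts together by a dimension count, combining a division-algorithm reduction with explicit interpolation. Let $E\colon R_m\to\Fun(k^m,k)$ be the evaluation homomorphism $h\mapsto(P\mapsto h(P))$. It is a $k$-algebra homomorphism and, since $a^q=a$ for all $a\in k$, it kills each generator $x_i^q-x_i$. Hence $I_{q,m}\subseteq\ker E$, and $E$ factors through $R_m/I_{q,m}$.

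\emph{Step 1 (spanning by reduced monomials).} I will show every class in $R_m/I_{q,m}$ has a representative with $\deg_{x_i}<q$ for all $i$. Reducing a monomial modulo $x_i^q-x_i$ replaces $x_i^{a}$ (with $a\geq q$) by $x_i^{a-q+1}$, which strictly lowers $\deg_{x_i}$ and leaves the other variables untouched. Iterating terminates, so $R_m/I_{q,m}$ is spanned by the images of the $q^m$ reduced monomials $x_1^{a_1}\cdots x_m^{a_m}$ with $0\leq a_i<q$. Therefore $\dim_k R_m/I_{q,m}\leq q^m$.

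\emph{Step 2 (surjectivity of evaluation).} Every function $k^m\to k$ is a polynomial function. For $P=(p_1,\ldots,p_m)\in k^m$, the polynomial
\[
  \delta_P=\prod_{i=1}^m\bigl(1-(x_i-p_i)^{q-1}\bigr)
\]
equals $1$ at $P$ and $0$ at every other point of $k^m$, because $a^{q-1}=1$ for $a\in k^\times$. Any $f\in\Fun(k^m,k)$ is then the image of $\sum_P f(P)\,\delta_P$. So the induced map $\bar E\colon R_m/I_{q,m}\to\Fun(k^m,k)$ is surjective onto a space of dimension $q^m$.

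\emph{Step 3 (conclusion).} Combining Steps 1 and 2, $\bar E$ is a surjective linear map from a space of dimension at most $q^m$ onto a space of dimension exactly $q^m$. Hence $\bar E$ is bijective, which gives part (3).

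It also follows that the reduced monomials are linearly independent modulo $I_{q,m}$, since they span a space of dimension $q^m$. This yields uniqueness of the reduced representative in part (2), with existence coming from Step 1.

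For part (1), injectivity of $\bar E$ says exactly that $\ker E=I_{q,m}$, i.e. $I_{q,m}$ is the ideal of all polynomials vanishing on $k^m$.

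\emph{Main obstacle.} The only delicate point is the uniqueness in (2). Proving it directly, by showing that a nonzero reduced polynomial cannot vanish on $k^m$, would need an induction on $m$ using that a one-variable polynomial of degree $<q$ has fewer than $q$ roots. The counting argument above sidesteps this induction.
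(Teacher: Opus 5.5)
Your proof is correct, but it handles the crucial step differently from the paper. Both arguments share the same two ingredients: the monomial reduction $x_i^a\equiv x_i^{a-q+1}$, which the paper phrases as successive Euclidean division, and the point indicators $\delta_P$ for surjectivity. The difference is how injectivity and uniqueness are obtained. The paper proves directly, by induction on $m$, that a reduced polynomial vanishing on $k^m$ is zero: it slices along $x_m$, uses that a one-variable polynomial of degree $<q$ with $q$ roots is zero, and then applies the induction hypothesis to the coefficients $r_j$. You instead get injectivity for free from a dimension count. A surjection from a space spanned by $q^m$ reduced monomials onto the $q^m$-dimensional space $\Fun(k^m,k)$ must be bijective, and the reduced monomials must then be independent modulo $I_{q,m}$. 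Your route is shorter and purely linear-algebraic. It still yields, as a corollary, the vanishing statement that the paper reuses later in Proposition~\ref{prop:prime-kernel}, Theorem~\ref{thm:degree-criterion}, and (over extension fields $K$) Theorem~\ref{thm:extensions}, so nothing needed downstream is lost. The paper's induction, by contrast, uses neither interpolation nor any counting; it relies only on the one-variable root bound. It therefore applies verbatim to an arbitrary product grid $S_1\times\cdots\times S_m$ over any field with $\deg_{x_i}h<|S_i|$, including infinite $S_i$, where a dimension count is not available.
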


\begin{proof}
Successive Euclidean division by the monic polynomials $x_i^q-x_i$ writes
every $f\in R_m$ as
\[
  f=g+r,
  \qquad g\in I_{q,m},
  \qquad \deg_{x_i}r<q\ \text{for all }i.
\]
We prove that a reduced polynomial $r$ vanishing on $k^m$ is zero.  The proof
is by induction on $m$.  For $m=1$, a polynomial of degree less than $q$ with
$q$ roots is zero.  For $m>1$, write
\[
  r=\sum_{j=0}^{q-1}r_j(x_1,\ldots,x_{m-1})x_m^j.
\]
For each $a\in k^{m-1}$, the polynomial $r(a,x_m)$ has degree less than $q$
and vanishes at every element of $k$, hence is zero.  Thus every $r_j$ vanishes
on $k^{m-1}$, and the induction hypothesis gives $r_j=0$.

For surjectivity of evaluation, if
\[
  P=(a_1,\ldots,a_m)\in k^m,
\]
set
\begin{equation}\label{eq:indicator}
  \delta_P(x_1,\ldots,x_m)
  =\prod_{j=1}^m\bigl(1-(x_j-a_j)^{q-1}\bigr).
\end{equation}
Then $\delta_P(P)=1$ and $\delta_P(Q)=0$ for $Q\neq P$.  Linear combinations
of these point indicators represent all functions $k^m\to k$.
\end{proof}

\begin{definition}
The representative in Lemma~\ref{lem:grid}(2) is the \emph{$q$-reduced
representative} of the corresponding finite function.  Thus $q$-reduced
always means reduced in the degree of each individual source variable, not in
total degree.
\end{definition}

\subsection{The lifting formulation}

For a square tuple $F=(F_1,\ldots,F_n)\in R_n^n$, let
\[
  \Theta_F:R_n\longrightarrow R_n,
  \qquad x_i\longmapsto F_i.
\]
Since $F_i^q-F_i$ vanishes on $k^n$, the map $\Theta_F$ preserves $I_{q,n}$
and induces pullback
\[
  F^\ast:R_n/I_{q,n}\longrightarrow R_n/I_{q,n}.
\]
We also use the coordinate substitution map
\[
  \theta_F:k[U_1,\ldots,U_n]\longrightarrow R_n,
  \qquad U_i\longmapsto F_i.
\]

\begin{proposition}[Lifting reformulation]\label{prop:lifting}
For $F\in R_n^n$:
\begin{enumerate}
  \item the finite-set map $F:k^n\to k^n$ is bijective if and only if
  $F^\ast$ is an automorphism of $R_n/I_{q,n}$;
  \item the coordinates $F_1,\ldots,F_n$ are algebraically independent if
  and only if $\Theta_F$ is injective, equivalently if and only if
  $\theta_F$ is injective.
\end{enumerate}
\end{proposition}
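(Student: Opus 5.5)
Both parts reduce to Lemma~\ref{lem:grid}, which identifies $R_n/I_{q,n}$ with $\Fun(k^n,k)$, together with the observation that $\Theta_F$ and $\theta_F$ have the same image $k[F_1,\ldots,F_n]$.

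\emph{Part (1).} I would first check that $\Theta_F$ descends to $F^\ast$. Since $F_i^q-F_i$ vanishes on $k^n$, part (1) of Lemma~\ref{lem:grid} gives $F_i^q-F_i\in I_{q,n}$, so $\Theta_F(I_{q,n})\subseteq I_{q,n}$. Under the isomorphism $R_n/I_{q,n}\cong\Fun(k^n,k)$, the class of $h$ is sent by $F^\ast$ to the function $h\circ F$. Indeed, evaluating $h(F_1,\ldots,F_n)$ at $P$ gives $h(F(P))$. So $F^\ast$ is precomposition with the set map $F:k^n\to k^n$, which is a $k$-algebra endomorphism of the finite-dimensional algebra $\Fun(k^n,k)$.

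If $F$ is bijective, precomposition with $F^{-1}$ is an inverse to $F^\ast$. Conversely, suppose $F$ is not injective, say $F(P)=F(Q)$ with $P\neq Q$. Then every function of the form $h\circ F$ takes equal values at $P$ and $Q$, so $\delta_P$ is not in the image and $F^\ast$ is not surjective. For a self-map of a finite set, injectivity and bijectivity coincide, which closes the equivalence. Alternatively, if $F$ is not surjective, then $\delta_R\circ F=0$ for some $R\notin F(k^n)$, so $F^\ast$ is not injective; either argument suffices.

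\emph{Part (2).} The homomorphism $\theta_F$ is $\Theta_F$ composed with the isomorphism $k[U_1,\ldots,U_n]\to R_n$ given by $U_i\mapsto x_i$. Hence $\ker\theta_F$ corresponds exactly to $\ker\Theta_F$, and one of the two maps is injective if and only if the other is. By the definition of algebraic independence, injectivity of $\theta_F$ is precisely the statement that $F_1,\ldots,F_n$ are algebraically independent.

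I expect no real obstacle here. The only point needing care is the converse direction in (1), and a finite-cardinality argument suffices for it: $F^\ast$ is a linear endomorphism of a $q^n$-dimensional space, so injectivity and surjectivity are equivalent. Surjectivity of $F^\ast$ forces $F$ to separate points, because the point indicators $\delta_P$ lie in its image. This gives injectivity of $F$, and hence bijectivity since $k^n$ is finite.
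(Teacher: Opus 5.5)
Your proof is correct and follows the paper's own argument. The paper likewise identifies $F^\ast$ with pullback along $F$ via Lemma~\ref{lem:grid}(3), uses the fact that this pullback is invertible exactly when $F$ is bijective, and reads off part (2) from the definition after renaming variables. Your version just fills in the details the paper leaves implicit: that $\Theta_F$ descends to the quotient, and the point-indicator argument for the converse direction.
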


\begin{proof}
Under Lemma~\ref{lem:grid}(3), the endomorphism $F^\ast$ is pullback of
functions along the finite-set map $F$.  Pullback on the full function algebra
is invertible exactly when $F$ is bijective.  The second assertion is the
definition of algebraic independence, after renaming the source variables.
\end{proof}

\begin{proposition}[Prime-kernel constraint]\label{prop:prime-kernel}
Suppose that $F:k^n\to k^n$ is surjective.  Then $\ker\theta_F$ is a prime
ideal contained in
\[
  (U_1^q-U_1,\ldots,U_n^q-U_n).
\]
Consequently every nonzero annihilating relation has total degree at least
$q$.
\end{proposition}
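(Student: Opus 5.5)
Primality comes for free: $\ker\theta_F$ is the kernel of a $k$-algebra homomorphism $k[U_1,\ldots,U_n]\to R_n$ into an integral domain, so the quotient $k[U]/\ker\theta_F$ embeds in the domain $R_n$ and is itself a domain. Hence $\ker\theta_F$ is prime; this step uses nothing about surjectivity. The real content is the containment, and the degree bound will follow from it.

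For the containment, take $G\in\ker\theta_F$. Then $G(F_1,\ldots,F_n)=0$ identically in $R_n$, so evaluating at any $P\in k^n$ gives $G(F(P))=0$. Since $F:k^n\to k^n$ is surjective, every $Q\in k^n$ has the form $F(P)$, so $G$ vanishes on all of $k^n$. By Lemma~\ref{lem:grid}(1), applied in the variables $U_1,\ldots,U_n$, this says $G\in(U_1^q-U_1,\ldots,U_n^q-U_n)$.

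For the degree consequence, let $G\neq0$ be an annihilating relation, so $G\in I_{q,n}$ in the $U$-variables. If $\deg G<q$, then in particular $\deg_{U_i}G<q$ for every $i$, so $G$ is its own $q$-reduced representative. Lemma~\ref{lem:grid}(2) gives uniqueness of the reduced representative, and the zero class has reduced representative $0$, so $G=0$, a contradiction. Hence $\deg G\geq q$.

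The only place any care is needed is this last step: one has to note that total degree below $q$ forces every individual variable degree below $q$, so that the uniqueness statement of Lemma~\ref{lem:grid}(2) applies. Everything else is immediate from the definitions.
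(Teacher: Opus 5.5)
Your proof is correct and follows the paper's argument step for step. Primality comes from $R_n$ being a domain. Containment follows from surjectivity together with Lemma~\ref{lem:grid}(1) in the $U$-variables. The degree bound holds because total degree below $q$ makes the relation $q$-reduced, hence zero. The only cosmetic difference is that you phrase the last step via uniqueness of reduced representatives, whereas the paper says a reduced polynomial vanishing on $k^n$ is zero; these are the same fact.
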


\begin{proof}
The kernel is prime because $R_n$ is a domain.  If
$H(F_1,\ldots,F_n)=0$ and $b\in k^n$, choose $a\in k^n$ with $F(a)=b$.
Then
\[
  H(b)=H(F(a))=0.
\]
Lemma~\ref{lem:grid}, applied in the $U$-variables, gives the ideal
containment.  If a polynomial has total degree less than $q$, then its degree
in every individual variable is less than $q$.  It is therefore $q$-reduced
and cannot vanish on all of $k^n$ unless it is zero.
\end{proof}

\subsection{Jacobian rank}

\begin{lemma}[Dependence forces Jacobian rank deficiency]\label{lem:jacobian-rank}
Let $k$ be a perfect field of characteristic $p>0$, and let
\[
  f_1,\ldots,f_n\in k[x_1,\ldots,x_m].
\]
If these polynomials are algebraically dependent over $k$, then their
$n\times m$ Jacobian matrix
\[
  \left(\frac{\partial f_i}{\partial x_j}\right)_{i,j}
\]
has row rank strictly less than $n$ over $k(x_1,\ldots,x_m)$.  When $n\leq m$,
a nonzero $n\times n$ Jacobian minor therefore implies algebraic independence.
\end{lemma}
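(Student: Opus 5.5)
The plan is the standard minimal-relation argument, using perfectness of $k$ to remove $p$-th powers. Suppose $f_1,\ldots,f_n$ are algebraically dependent. Among all nonzero polynomials $H\in k[U_1,\ldots,U_n]$ with $H(f_1,\ldots,f_n)=0$, choose one of minimal total degree. Differentiating $H(f_1,\ldots,f_n)=0$ with respect to $x_j$ and applying the chain rule gives
\[
  \sum_{i=1}^n \frac{\partial H}{\partial U_i}(f)\,\frac{\partial f_i}{\partial x_j}=0
  \qquad(1\leq j\leq m).
\]
So the row vector $v=\bigl(\partial H/\partial U_i(f)\bigr)_i$ annihilates the Jacobian matrix from the left. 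It therefore suffices to show that $v\neq0$, since a nonzero left kernel vector over $k(x_1,\ldots,x_m)$ forces row rank $<n$.

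Each $\partial H/\partial U_i$ has total degree strictly less than $\deg H$. By minimality, $(\partial H/\partial U_i)(f)=0$ can happen only if $\partial H/\partial U_i$ is the zero polynomial. Hence if $v=0$, then every formal partial derivative of $H$ vanishes. In characteristic $p$, this means every exponent appearing in $H$ is divisible by $p$, so $H=\sum_\mu c_\mu U^{p\mu}$.

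The key step uses perfectness. Since $k$ is perfect, each $c_\mu=d_\mu^p$ for some $d_\mu\in k$. Then $H=G^p$ with $G=\sum_\mu d_\mu U^\mu$, by additivity of Frobenius. Because $R_m$ is a domain, $G(f)^p=H(f)=0$ gives $G(f)=0$. But $G$ is nonzero of degree $\deg H/p<\deg H$ (note $\deg H\geq1$, since a nonzero constant is not a relation), contradicting minimality. Hence $v\neq0$ and the rank is deficient.

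For the final assertion with $n\leq m$: if some $n\times n$ minor is nonzero, the rows are linearly independent over $k(x)$, so the rank equals $n$, and the contrapositive gives independence. The only genuinely delicate point is the $p$-th power step, which is exactly where perfectness is needed; over an imperfect field the statement fails, e.g.\ for $x^p-t$ type examples. Everything else is the chain rule.
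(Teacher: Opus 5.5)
Your proof is correct and is essentially the paper's argument: take a relation of minimal total degree, use perfectness to exclude the case where all formal partials vanish (since then the relation would be a $p$th power of a smaller one), and use the chain rule to produce a nonzero left-kernel vector of the Jacobian. One correction to your closing aside: the lemma does not fail over imperfect fields. A relation over $k$ remains a relation over $\overline{k}$, which is perfect, and the rank of a matrix with entries in $k(x_1,\ldots,x_m)$ is unchanged under field extension, so only your particular $p$th-root step needs perfectness (consistent with the paper's remark that the implication holds in arbitrary positive characteristic).
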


\begin{proof}
Choose a nonzero relation
\[
  Q(f_1,\ldots,f_n)=0,
  \qquad Q\in k[U_1,\ldots,U_n],
\]
of minimal total degree.  If every formal partial derivative of $Q$ were zero,
then every exponent occurring in $Q$ would be divisible by $p$.  Since $k$ is
perfect, every coefficient has a $p$th root in $k$, so
\[
  Q=P^p
\]
for some polynomial $P\in k[U_1,\ldots,U_n]$ of smaller degree.  The
polynomial ring is a domain; hence $P(f_1,\ldots,f_n)=0$, contradicting
minimality.

Thus at least one $\partial Q/\partial U_i$ is nonzero.  If
\[
  \left(\frac{\partial Q}{\partial U_i}\right)(f_1,\ldots,f_n)=0,
\]
then this nonzero partial derivative would be a relation of smaller total
degree, again contradicting minimality.  Therefore
\[
  (\nabla Q)(f_1,\ldots,f_n)\neq0.
\]
Differentiating $Q(f_1,\ldots,f_n)=0$ gives
\[
  (\nabla Q)(f_1,\ldots,f_n)
  \left(\frac{\partial f_i}{\partial x_j}\right)_{i,j}=0,
\]
a nontrivial linear relation among the rows over the fraction field.
\end{proof}

\begin{remark}
For a square tuple, Lemma~\ref{lem:jacobian-rank} is the implication proved by
Maubach and Willems in \cite[Lemmas~2.4--2.5]{MaubachWillems2014}.  Their
argument treats arbitrary fields of positive characteristic by a
finite-extension descent.  The perfect-field form above suffices for finite
fields.
\end{remark}

\begin{definition}
A polynomial map $F\in k[x_1,\ldots,x_n]^n$ is a \emph{Keller map} if
\[
  \det\Jac(F)\in k^\times.
\]
It is a \emph{mock polynomial automorphism} if it is a Keller map and its
induced map $k^n\to k^n$ is bijective.
\end{definition}

\begin{corollary}[Keller maps]\label{cor:keller}
If $F=(F_1,\ldots,F_n)\in R_n^n$ satisfies
\[
  \det\Jac(F)\neq0
\]
as a polynomial, then $F_1,\ldots,F_n$ are algebraically independent.  In
particular, every Keller map and every mock polynomial automorphism satisfies
the conclusion of Conjecture~\ref{conj:MW}.
\end{corollary}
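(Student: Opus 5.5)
The plan is to apply Lemma~\ref{lem:jacobian-rank} directly in the square case $m=n$, with $k=\F_q$ as the perfect field. We argue by contraposition. Suppose $F_1,\ldots,F_n$ are algebraically dependent over $k$. Finite fields are perfect, so Lemma~\ref{lem:jacobian-rank} applies and shows that the $n\times n$ Jacobian matrix $\Jac(F)$ has row rank strictly less than $n$ over $k(x_1,\ldots,x_n)$. A square matrix over a field with deficient row rank has vanishing determinant. Hence $\det\Jac(F)=0$ in $k(x_1,\ldots,x_n)$, and therefore also in $R_n$, which contradicts the hypothesis $\det\Jac(F)\neq0$. This is exactly the final sentence of Lemma~\ref{lem:jacobian-rank}: a nonzero $n\times n$ minor forces independence.

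For the second assertion, let $F$ be a Keller map. Then $\det\Jac(F)\in k^\times$ is a nonzero constant, and in particular a nonzero polynomial, so the first part gives algebraic independence of the coordinates. A mock polynomial automorphism is by definition a Keller map, so it is covered as well. This is precisely the conclusion of Conjecture~\ref{conj:MW} for such maps, and here it holds without using bijectivity at all.

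No step presents a real obstacle; the substantive work is already contained in Lemma~\ref{lem:jacobian-rank}. One point is worth stating explicitly. The hypothesis $\det\Jac(F)\neq0$ concerns the formal determinant as an element of $R_n$, not its values on $k^n$. In characteristic $p$ the determinant may be a nonzero polynomial that vanishes on every point of $k^n$. The argument is unaffected, because it takes place entirely in the fraction field $k(x_1,\ldots,x_n)$.
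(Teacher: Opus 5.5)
Your proposal is correct and matches the paper's proof: the corollary is the contrapositive of Lemma~\ref{lem:jacobian-rank} in the square case $m=n$, and the Keller and mock-automorphism cases follow because a nonzero constant determinant is in particular a nonzero polynomial. Your remark is also correct: the argument takes place in $k(x_1,\ldots,x_n)$, so it is unaffected if the determinant vanishes at every point of $k^n$.
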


\begin{proof}
Apply the contrapositive of Lemma~\ref{lem:jacobian-rank} with $m=n$.
\end{proof}

\begin{remark}
The hypothesis in Corollary~\ref{cor:keller} is weaker than the Keller
condition: the determinant need only be a nonzero polynomial, not a nonzero
constant.
\end{remark}

\begin{example}[Zero Jacobian need not imply dependence]\label{ex:zero-jacobian}
Over $\F_2$, set
\[
  s=xy+xz+yz,
  \qquad
  (A,B,C)=(s,\ s+x+y,\ s+x+z).
\]
This map exchanges $(0,1,1)$ and $(1,0,0)$ and fixes the other six points of
$\F_2^3$.  Hence it is a permutation.  Its formal Jacobian determinant is
zero, but $A,B,C$ are algebraically independent.
\end{example}

\begin{proof}
The finite action is
\[
\begin{array}{c@{\;\longmapsto\;}c@{\qquad}c@{\;\longmapsto\;}c}
(0,0,0)&(0,0,0)&(0,0,1)&(0,0,1)\\
(0,1,0)&(0,1,0)&(0,1,1)&(1,0,0)\\
(1,0,0)&(0,1,1)&(1,0,1)&(1,0,1)\\
(1,1,0)&(1,1,0)&(1,1,1)&(1,1,1).
\end{array}
\]
For the Jacobian, add the first row to the second and third rows.  The
determinant becomes
\[
  \det
  \begin{pmatrix}
    y+z & x+z & x+y\\
    1&1&0\\
    1&0&1
  \end{pmatrix}
  =(y+z)+(x+z)+(x+y)=0.
\]
On the other hand,
\[
  x=z+A+C,
  \qquad y=z+B+C.
\]
Substitution into $A=xy+xz+yz$ gives
\[
  z^2=A+AB+AC+BC+C^2.
\]
Thus $z$, and then $x$ and $y$, are algebraic over $\F_2(A,B,C)$.  Therefore
\[
  \trdeg_{\F_2}\F_2(A,B,C)
  =\trdeg_{\F_2}\F_2(x,y,z)=3,
\]
which proves algebraic independence.
\end{proof}

\section{The low-dimensional obstruction}\label{sec:low-dimensional}

\begin{theorem}[Dimension one]\label{thm:dimension-one}
If $f\in k[x]$ induces a bijection $k\to k$, then $f$ is algebraically
independent over $k$.
\end{theorem}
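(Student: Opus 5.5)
The plan is to run the argument through the kernel of the substitution map $\theta_f:k[U]\to k[x]$, $U\mapsto f$. By the paper's definition, algebraic independence of the single polynomial $f$ means exactly that $\ker\theta_f=0$. So I will suppose $\ker\theta_f\neq0$ and derive a contradiction with bijectivity.

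First I invoke Proposition~\ref{prop:prime-kernel} with $n=1$. A bijection $k\to k$ is in particular surjective, so $\ker\theta_f$ is a prime ideal of $k[U]$ contained in $(U^q-U)$. Since $k[U]$ is a principal ideal domain, a nonzero prime ideal is generated by an irreducible polynomial $H$. The containment $(H)\subseteq(U^q-U)$ gives $H=(U^q-U)g$ for some $g\in k[U]$.

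Irreducibility of $H$ then forces $g\in k^\times$ and $U^q-U$ irreducible. This is impossible, because $U^q-U=U\,(U^{q-1}-1)$ with both factors nonconstant when $q\geq3$, and $U^q-U=U(U-1)$ when $q=2$. In every case $U^q-U$ has the $q\geq2$ distinct roots $k$, so it has a linear factor of smaller degree. This contradiction gives $\ker\theta_f=0$, which is the claim.

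As a cross-check, I will also record the direct route. A nonzero $H$ with $H(f)=0$ makes $f$ algebraic over $k$. Since $k$ is algebraically closed in $k(x)$, comparing degrees in $H(f)=0$ shows $f\in k$. A constant map cannot be a bijection of a set with $q\geq2$ elements.

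There is no serious obstacle here. The only point needing care is to use the surjectivity hypothesis in the form required by Proposition~\ref{prop:prime-kernel}, so that the containment $\ker\theta_f\subseteq(U^q-U)$ is available, and to note that $U^q-U$ is reducible for every $q$.
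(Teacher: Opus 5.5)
Your proof is correct, but your main argument takes a different route from the paper's. Your ``cross-check'' paragraph is essentially the paper's proof.

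The paper argues in one line. Bijectivity on $q\geq2$ points makes $f$ a nonconstant polynomial, and for nonconstant $H$ one has $\deg H(f)=\deg H\cdot\deg f>0$, so $H(f)\neq0$.

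You instead go through Proposition~\ref{prop:prime-kernel}. Surjectivity places the prime $\ker\theta_f$ inside $(U^q-U)$. A nonzero prime of the principal ideal domain $k[U]$ is $(H)$ with $H$ irreducible. Then $H=(U^q-U)g$ forces $U^q-U$ to be irreducible, which it is not. Your case split is unnecessary, since $U^{q-1}-1$ is already nonconstant for $q=2$. Every step is valid, and Proposition~\ref{prop:prime-kernel} is proved before the theorem, so there is no circularity.

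The trade-off is as follows. The paper's argument uses only that $f$ is nonconstant as a polynomial. It therefore proves the stronger fact that every nonconstant element of $k[x]$ is transcendental, including, e.g., $x^q-x$, which induces the zero function. It needs neither the grid lemma nor surjectivity.

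Your argument uses heavier machinery and genuinely needs the finite-set hypothesis, namely at least two image points. In return, it shows the obstruction in the same form as the dimension-two proof. An irreducible polynomial in $k[U]$ with a zero in $k$ is linear, so an image of transcendence degree zero contains at most one rational point. Likewise, an image of transcendence degree at most one contains at most $q+1$ rational points by Proposition~\ref{prop:curve-image-bound}.
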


\begin{proof}
The polynomial $f$ is nonconstant.  If a nonzero $H\in k[U]$ satisfied
$H(f)=0$, then $H$ would be nonconstant and
\[
  \deg H(f)=\deg H\cdot\deg f>0,
\]
a contradiction.
\end{proof}

The dimension-two argument uses standard facts about normalization and
curves.  Schemes of finite type over a field are Nagata
\cite[Tag~035B]{StacksNagata}, and the normalization of a Nagata scheme is
finite \cite[Tag~035S]{StacksNormalization}.  One-variable function fields
admit normal projective models; rational maps from normal curves to proper
varieties extend to morphisms; and over a perfect field the normal projective
model is smooth \cite[Section~53.2, Tag~0BXX]{StacksCurves}.  We also use
L\"uroth's theorem \cite{LangTate1952} and the characterization of the
projective line among genus-zero curves
\cite[Proposition~53.10.4, Tag~0C6U]{StacksProjectiveLine}.

\begin{lemma}[Dominated normal curves]\label{lem:dominated-curve}
Let $k$ be a perfect field, let $m\geq1$, and let $X$ be a normal integral
affine curve of finite type over $k$.  Suppose that there is a dominant
$k$-morphism
\[
  h:\A_k^m\longrightarrow X.
\]
Then the smooth projective model $\overline X$ of $k(X)$ is isomorphic to
$\Pone_k$.

In particular, if $k=\F_q$, then
\[
  \#X(k)\leq q+1.
\]
\end{lemma}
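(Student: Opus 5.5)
The plan is to translate the dominance into a field inclusion $k(X)\hookrightarrow k(x_1,\ldots,x_m)$, apply L\"uroth's theorem to conclude that $k(X)$ is a rational function field, and then count rational points on $\overline X$.

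\textbf{Step 1: $k(X)$ is purely transcendental.}
Since $h$ is dominant and $X$ is integral, pullback of functions gives an injection of function fields
\[
  k(X)\hookrightarrow k(x_1,\ldots,x_m).
\]
Thus $k(X)$ is a subfield of a purely transcendental extension, and $\trdeg_k k(X)=1$ because $X$ is a curve. I first need $k$ to be algebraically closed in $k(X)$. This holds because $k$ is algebraically closed in $k(x_1,\ldots,x_m)$, so it is also algebraically closed in the intermediate field $k(X)$. I would then invoke L\"uroth's theorem in its generalized form. This form is the expected main obstacle. Classical L\"uroth treats subfields of $k(t)$. For $m\geq2$ one needs the statement that a transcendence-degree-one subfield $L\subseteq k(x_1,\ldots,x_m)$ containing $k$ is isomorphic to $k(t)$. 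This is exactly the version for which the paper cites \cite{LangTate1952}.

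If I had to argue it directly, I would specialize. Choose a general line, or a general $k'$-rational curve $\A^1\to\A^m$ over a finite extension $k'$ of $k$, on which the generators of $k(X)$ restrict to a function field of transcendence degree one. This gives an embedding $k'(X)\hookrightarrow k'(t)$, and classical L\"uroth shows that $\overline X_{k'}$ has genus zero. Because genus is invariant under the separable base change, $\overline X$ has genus zero over $k$.

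\textbf{Step 2: identify $\overline X$ with $\Pone_k$.}
Given $k(X)\cong k(t)$, the smooth projective model $\overline X$ is the unique normal projective model of $k(t)$, namely $\Pone_k$. In the specialization route, $\overline X$ is instead a smooth projective genus-zero curve with $H^0(\mathcal O)=k$. To conclude it is $\Pone_k$, I would produce a $k$-rational point, or a divisor of odd degree, and apply \cite[Tag~0C6U]{StacksProjectiveLine}. Over a finite field a conic always has a rational point, by Chevalley--Warning. This gives $\overline X\cong\Pone_k$.

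\textbf{Step 3: the point bound.}
$X$ is normal, and over the perfect field $k$ it is therefore smooth. Hence $X$ is an open subscheme of its smooth projective model $\overline X$, and
\[
  \#X(k)\leq\#\overline X(k)=\#\Pone(\F_q)=q+1.
\]
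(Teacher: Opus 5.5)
Your main route is correct, but it differs from the paper's. You invoke the multivariable (Gordan--Igusa) form of L\"uroth's theorem directly over $k$. That gives $k(X)\cong k(t)$ at once, so $\overline X\cong\Pone_k$ follows as the unique normal projective model of $k(t)$. No genus computation, base change, or rational point is needed, and the argument works for every perfect $k$.

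The paper uses only the classical one-variable L\"uroth theorem; that is all it invokes \cite{LangTate1952} for, so it does not cite that reference for the generalized form. Its steps are these.
\begin{enumerate}
  \item It proves by hand that $k$ is algebraically closed in $k(x_1,\ldots,x_m)$.
  \item It deduces that $k(X)/k$ is regular, so $\overline X$ is smooth and geometrically integral.
  \item It base changes to $\overline{k}$ and restricts $h$ to an affine line through two points with distinct images. This embeds $\overline{k}(\overline X_{\overline{k}})$ in $\overline{k}(t)$, and classical L\"uroth then gives genus zero.
  \item It descends to $\Pone_k$ via Tag~0C6U, using the rational point $h(0)\in X(k)$.
\end{enumerate}
So your version is shorter but rests on a heavier black box. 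The paper's is self-contained modulo the classical statement, and it in effect reproves the case of the generalized theorem that it needs.

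Your fallback specialization route is essentially the paper's argument, but its descent step is weaker than it needs to be. Chevalley--Warning only handles finite $k$, whereas the first assertion of the lemma is stated for an arbitrary perfect field. The $k$-rational point you need is already available as $h(0)$, since $h$ is a $k$-morphism defined on all of $\A_k^m$. With that substitution the fallback covers the full statement.
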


\begin{proof}
Dominance gives an inclusion
\[
  K:=k(X)\hookrightarrow k(x_1,\ldots,x_m).
\]
We first verify that $k$ is algebraically closed in the rational function
field on the right.  Let
\[
  r=\frac{f}{g}\in k(x_1,\ldots,x_m)
\]
be algebraic over $k$, where $f,g\in k[x_1,\ldots,x_m]$ are coprime.  There is
nothing to prove if $r=0$.  Otherwise choose a nonzero relation
\[
  c_dr^d+\cdots+c_0=0,
  \qquad c_0c_d\neq0.
\]
After multiplication by $g^d$, reduction modulo $f$ gives
\[
  f\mid c_0g^d.
\]
Since $f$ and $g$ are coprime, $f$ is a unit.  Reduction modulo $g$ similarly
shows that $g$ is a unit.  Hence $r\in k$.

It follows that $k$ is algebraically closed in $K$.  Since $k$ is perfect and
$K/k$ is finitely generated, the extension is separably generated.  Thus
$K/k$ is regular.

Let $\overline X$ be the normal projective curve with function field $K$.
It is smooth over $k$.  Regularity of $K/k$ implies that
\[
  K\otimes_k\overline{k}
\]
is a domain, so $\overline X_{\overline{k}}$ is integral.  Thus $\overline X$
is geometrically integral.  The affine curve $X$ is identified with a
nonempty open subcurve of its projective model $\overline X$; we retain these
two distinct symbols throughout the rest of the proof.

After base change to $\overline{k}$, the composite
\[
  \A_{\overline{k}}^m
  \longrightarrow X_{\overline{k}}
  \longrightarrow \overline X_{\overline{k}}
\]
is still dominant and hence nonconstant.  Working now over
$\overline{k}$, choose points $P,Q\in\A^m(\overline{k})$ with distinct images,
and let
$L\subset\A_{\overline{k}}^m$ be the affine line through them.  The
restriction
\[
  L\simeq\A_{\overline{k}}^1
  \longrightarrow \overline X_{\overline{k}}
\]
is nonconstant.  Since the target is proper, this morphism extends uniquely
to a nonconstant morphism
\[
  \Pone_{\overline{k}}
  \longrightarrow \overline X_{\overline{k}}.
\]
Consequently there is an inclusion
\[
  \overline{k}(\overline X_{\overline{k}})
  \hookrightarrow \overline{k}(t).
\]
By L\"uroth's theorem,
\[
  \overline{k}(\overline X_{\overline{k}})=\overline{k}(u)
\]
for some $u\in\overline{k}(t)$.  Hence
\[
  \overline X_{\overline{k}}\simeq\Pone_{\overline{k}},
\]
and $\overline X$ has genus zero.

The $k$-morphism $h$ supplies a $k$-rational point:
\[
  h(0)\in X(k)\subset\overline X(k).
\]
This point defines an invertible sheaf of degree one on $\overline X$.  The
characterization of the projective line cited above therefore gives
\[
  \overline X\simeq\Pone_k.
\]
If $k=\F_q$, then
\[
  \#X(k)\leq\#\overline X(k)=\#\Pone(k)=q+1.
\]
\end{proof}

\begin{proposition}[Curve-image bound]\label{prop:curve-image-bound}
Let $m,N\geq1$, and let
\[
  F=(F_1,\ldots,F_N)\in k[x_1,\ldots,x_m]^N.
\]
If
\[
  \trdeg_k k[F_1,\ldots,F_N]\leq1,
\]
then
\[
  \#F(k^m)\leq q+1.
\]
\end{proposition}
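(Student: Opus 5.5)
Split according to the transcendence degree. If $\trdeg_k k[F_1,\ldots,F_N]=0$, then each $F_i$ is algebraic over $k$. The argument in Lemma~\ref{lem:dominated-curve} shows that $k$ is algebraically closed in $k(x_1,\ldots,x_m)$, so every $F_i$ is a constant. Then $\#F(k^m)=1\leq q+1$.

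Now suppose the transcendence degree is exactly one. Let $S=k[F_1,\ldots,F_N]\subset R_m$. It is a finitely generated $k$-domain of dimension one, so $Y=\Spec S$ is an integral affine curve. The inclusion $S\hookrightarrow R_m$ gives a dominant morphism $F:\A_k^m\to Y$, and the finite map $k^m\to k^N$ factors through $Y(k)\subset\A_k^N$ via the closed embedding given by the $F_i$. Since $Y$ need not be normal, we pass to the integral closure $\widetilde S$ of $S$ in its fraction field $K=\operatorname{Frac}(S)$. Finite type schemes are Nagata, so $\widetilde S$ is finite over $S$, and $X=\Spec\widetilde S$ is a normal integral affine curve of finite type with a finite birational morphism $\nu:X\to Y$.

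The key step is to lift $F$ through $\nu$. The ring $R_m$ is integrally closed in $k(x_1,\ldots,x_m)$, because a UFD is normal. Elements of $\widetilde S$ lie in $K\subset k(x_1,\ldots,x_m)$ and are integral over $S\subset R_m$, so $\widetilde S\subset R_m$. Hence $S\subset\widetilde S\subset R_m$, and the morphism factors as $\A_k^m\xrightarrow{h}X\xrightarrow{\nu}Y$ with $h$ dominant. By Lemma~\ref{lem:dominated-curve}, $\#X(k)\leq q+1$. Every point $F(P)$ with $P\in k^m$ equals $\nu(h(P))$, and $h(P)$ is a $k$-rational point of $X$ because it corresponds to the $k$-algebra map $\widetilde S\to k$ given by evaluation at $P$. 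Therefore
\[
  F(k^m)\subset\nu\bigl(X(k)\bigr),
  \qquad\text{so}\qquad
  \#F(k^m)\leq\#X(k)\leq q+1.
\]

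I expect the main obstacle to be justifying this factorization through the normalization. Non-normal curves can have more rational points than $q+1$, so the bound must be obtained on $X$ rather than on $Y$. The inclusion $\widetilde S\subset R_m$ is what makes this work, and it needs only the normality of the polynomial ring.
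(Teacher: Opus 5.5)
Your proposal is correct and follows essentially the same route as the paper: normalize $S=k[F_1,\ldots,F_N]$, use normality of $R_m$ to get $\widetilde S\subset R_m$, factor the grid map through $X(k)$, and apply Lemma~\ref{lem:dominated-curve}. The only difference is cosmetic and lies in the transcendence-degree-zero case. There the paper argues that a nonconstant polynomial is transcendental by comparing degrees, whereas you use the algebraic closedness of $k$ in $k(x_1,\ldots,x_m)$.
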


\begin{proof}
Put
\[
  R=k[x_1,\ldots,x_m],
  \qquad A=k[F_1,\ldots,F_N]\subset R.
\]
If $\trdeg_kA=0$, then every $F_i$ is algebraic over $k$.  A nonconstant
element of a polynomial ring over $k$ is transcendental over $k$, so every
$F_i$ is constant and $\#F(k^m)=1$.

Assume $\trdeg_kA=1$, put $K=\operatorname{Frac}(A)$, and let $B$ be the
integral closure of $A$ in $K$.  Since $A$ is a finitely generated $k$-algebra,
$\Spec A$ is Nagata, and the normalization is finite.  Thus
\[
  \widetilde C=\Spec B
\]
is a normal integral affine curve of finite type over $k$.

We claim that $B\subset R$.  If $b\in B$, then $b$ satisfies a monic polynomial
with coefficients in $A\subset R$, so it is integral over $R$.  Moreover,
\[
  b\in K\subset\operatorname{Frac}(R).
\]
Since $R$ is integrally closed, $b\in R$.

The inclusions $A\subset B\subset R$ give a dominant morphism
\[
  \widetilde h:\A_k^m\longrightarrow\widetilde C
\]
through which the polynomial map $F$ factors.  Hence
\[
  \#F(k^m)\leq\#\widetilde C(k).
\]
Lemma~\ref{lem:dominated-curve} gives
\[
  \#\widetilde C(k)\leq q+1.
\]
\end{proof}

\begin{theorem}[Dimension two]\label{thm:dimension-two}
Let
\[
  F=(f,g)\in k[x,y]^2.
\]
If the induced map $F:k^2\to k^2$ is injective, then $f$ and $g$ are
algebraically independent over $k$.
\end{theorem}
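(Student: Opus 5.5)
We argue by contradiction, reducing everything to the curve-image bound of Proposition~\ref{prop:curve-image-bound}. Suppose that $F=(f,g)$ induces an injective map $k^2\to k^2$, but that $f$ and $g$ are algebraically dependent over $k$. Then the $k$-algebra $A=k[f,g]\subset k[x,y]$ is generated by two elements satisfying a nonzero polynomial relation, so
\[
  \trdeg_k k[f,g]\leq 1 .
\]
This is exactly the hypothesis of Proposition~\ref{prop:curve-image-bound} with $m=N=2$.

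Applying that proposition, we obtain
\[
  \#F(k^2)\leq q+1 .
\]
On the other hand, injectivity of $F$ on $k^2$ gives
\[
  \#F(k^2)=\#k^2=q^2 .
\]
It therefore suffices to observe that $q^2>q+1$ for every prime power $q$. This holds because $q^2-q-1\geq 4-2-1>0$ when $q\geq 2$, and the left side is increasing in $q$. The two counts are incompatible, so $f$ and $g$ are algebraically independent.

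All of the genuine content lies in Proposition~\ref{prop:curve-image-bound}, and through it in Lemma~\ref{lem:dominated-curve}, both of which we take as given. The main subtlety is already absorbed there. The finite-set map must factor through the rational points of a normal curve dominated by $\A^2_k$, and that curve's smooth projective model must be $\Pone_k$, which is what yields the bound $q+1$. What remains here is the elementary count above. Note that we only use injectivity, not bijectivity, so the statement covers slightly more than the permutation case of Theorem~\ref{thm:main}(3).
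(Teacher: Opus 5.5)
Your proposal is correct and follows the paper's proof essentially verbatim: both use injectivity to get $\#F(k^2)=q^2>q+1$ and then invoke Proposition~\ref{prop:curve-image-bound} to force transcendence degree two. The only difference is cosmetic: you phrase it as a contradiction, whereas the paper argues directly that $\trdeg_k k[f,g]\geq 2$.
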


\begin{proof}
Injectivity gives $\#F(k^2)=q^2$.  Since $q^2>q+1$ for every $q\geq2$,
Proposition~\ref{prop:curve-image-bound} shows that
\[
  \trdeg_k k[f,g]\geq2.
\]
The opposite inequality is automatic because the algebra is generated by two
elements.  Thus its transcendence degree is two, which is equivalent to
algebraic independence of $f$ and $g$.
\end{proof}

\begin{corollary}[Transcendence-degree-one obstruction]\label{cor:trdeg-one}
Let $m\geq2$ and $N\geq1$, and let
\[
  F=(F_1,\ldots,F_N):k^m\longrightarrow k^N
\]
be polynomially represented.  If $F$ is injective, then
\[
  \trdeg_k k[F_1,\ldots,F_N]\geq2.
\]
\end{corollary}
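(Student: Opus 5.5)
The plan is to derive the corollary directly from Proposition~\ref{prop:curve-image-bound} by counting image points. Injectivity of $F:k^m\to k^N$ gives
\[
  \#F(k^m)=\#k^m=q^m.
\]
Suppose, for contradiction, that $\trdeg_k k[F_1,\ldots,F_N]\leq1$. Proposition~\ref{prop:curve-image-bound} applies verbatim, since it places no restriction on $m$ or $N$, and yields $\#F(k^m)\leq q+1$.

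It remains to compare the two counts. Because $m\geq2$ and $q\geq2$, we have
\[
  q^m\geq q^2=q+q(q-1)\geq q+q>q+1,
\]
so the two bounds are incompatible. Hence the transcendence degree is at least two.

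There is no real obstacle here; the content lies entirely in Proposition~\ref{prop:curve-image-bound}, which was proved through the normalization and Lemma~\ref{lem:dominated-curve}. The only points to check are that the hypothesis $m\geq2$ is exactly what makes $q^m>q+1$ (for $m=1$ the bound would permit $q\leq q+1$), and that the statement is consistent with the target dimension. If $N=1$, the transcendence degree is at most one, so the conclusion says no injective map $k^m\to k$ with $m\geq2$ exists. This is correct, since $q^m>q$.
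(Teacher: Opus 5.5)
Your proof is correct and is the paper's own argument. Injectivity gives $\#F(k^m)=q^m\geq q^2>q+1$, and this contradicts the bound $\#F(k^m)\leq q+1$ that Proposition~\ref{prop:curve-image-bound} gives whenever $\trdeg_k k[F_1,\ldots,F_N]\leq1$.
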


\begin{proof}
Injectivity gives
\[
  \#F(k^m)=q^m\geq q^2>q+1.
\]
The conclusion follows from Proposition~\ref{prop:curve-image-bound}.
\end{proof}

\section{Comaximal separators and Artin--Schreier interpolation}\label{sec:interpolation}

We begin with elementary ideal arithmetic used in the central construction.

\begin{lemma}[Product of pairwise comaximal divisors]\label{lem:product-divisors}
Let $R$ be a commutative ring, and let $r_1,\ldots,r_N,f\in R$.  If the
$r_i$ are pairwise comaximal and $r_i\mid f$ for every $i$, then
\[
  r_1\cdots r_N\mid f.
\]
\end{lemma}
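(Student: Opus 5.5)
I will argue by induction on $N$, with the case $N=1$ trivial since $r_1\mid f$ is the hypothesis. The key point is that divisibility by a product of two comaximal elements follows from divisibility by each, and that the partial product $r_1\cdots r_{N-1}$ remains comaximal with $r_N$.

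\emph{Step 1 (two comaximal divisors).} Suppose $a,b\in R$ satisfy $(a)+(b)=R$, $a\mid f$ and $b\mid f$. Choose $u,v\in R$ with $ua+vb=1$, and write $f=a a'=b b'$. Then
\[
  f=f(ua+vb)=ua\,f+vb\,f=ua\,(b b')+vb\,(a a')=ab\,(ub'+va'),
\]
so $ab\mid f$.

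\emph{Step 2 (comaximality of the partial product).} I claim $(r_1\cdots r_{N-1})+(r_N)=R$. For each $i<N$ write $1=u_ir_i+v_ir_N$. Multiplying these $N-1$ identities gives
\[
  1=\prod_{i<N}(u_ir_i+v_ir_N).
\]
Expanding, the single term $\prod_{i<N}u_ir_i$ lies in $(r_1\cdots r_{N-1})$, and every other term contains a factor $r_N$. Hence $1$ lies in the sum of the two ideals.

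\emph{Step 3 (induction).} By the inductive hypothesis, $r_1\cdots r_{N-1}\mid f$. Together with $r_N\mid f$ and Step 2, Step 1 applied to $a=r_1\cdots r_{N-1}$ and $b=r_N$ gives $r_1\cdots r_N\mid f$.

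There is no real obstacle here. The only point needing care is Step 2, since comaximality is required for the product against $r_N$, not merely pairwise. The argument uses only commutativity, so no domain or noetherian hypothesis is needed.
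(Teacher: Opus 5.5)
Your proposal is correct and follows the paper's proof essentially step for step. Both handle the two-factor case via $f=f(ua+vb)$, obtain comaximality of $r_1\cdots r_{N-1}$ with $r_N$ by multiplying the $N-1$ B\'ezout identities, and then conclude by induction.
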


\begin{proof}
For two factors, write $f=ra=sb$ and choose $u,v\in R$ with $ur+vs=1$.  Then
\[
  f=f(ur+vs)=rs(bu+av).
\]
For each $i<N$, choose $a_i,b_i\in R$ with $a_ir_N+b_ir_i=1$.  Multiplying
these identities shows that $r_N$ is comaximal with $r_1\cdots r_{N-1}$.  The
two-factor argument and induction prove the result.
\end{proof}

\begin{lemma}[Coprimality with a product]\label{lem:coprime-product}
Let $R$ be a commutative ring.  If $(E,r_i)=R$ for every $i$, then
\[
  \left(E,\prod_{i=1}^N r_i\right)=R.
\]
\end{lemma}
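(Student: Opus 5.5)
The plan is to multiply together the individual coprimality identities, one for each index $i$. The hypothesis $(E,r_i)=R$ means that for each $i$ we can choose $a_i,b_i\in R$ with
\[
  a_iE+b_ir_i=1\qquad(1\leq i\leq N).
\]

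Taking the product of these $N$ identities gives
\[
  1=\prod_{i=1}^N\bigl(a_iE+b_ir_i\bigr).
\]
Now expand the product. Every monomial in the expansion that uses at least one factor $a_iE$ lies in the ideal $(E)$. The only remaining monomial is
\[
  \Bigl(\prod_{i=1}^N b_i\Bigr)\prod_{i=1}^N r_i .
\]
Hence the expansion can be written as
\[
  1=cE+\Bigl(\prod_{i=1}^N b_i\Bigr)\prod_{i=1}^N r_i
\]
for some $c\in R$. This places $1$ in the ideal $\bigl(E,\prod_i r_i\bigr)$, which is therefore all of $R$.

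Alternatively, one can argue by induction on $N$, exactly as in the proof of Lemma~\ref{lem:product-divisors}. Suppose $(E,s)=R$ and $(E,r)=R$. Multiplying a relation $uE+vs=1$ by $r$ gives $urE+v(sr)=r$. Hence $r$ lies in $(E,sr)$. Since $(E,r)=R$, the ideal $(E,sr)$ contains both $E$ and $r$, so it is all of $R$.

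There is no real obstacle. The only point needing care is the case $N=0$, if it is allowed: there the empty product is $1$, and the statement is trivial.
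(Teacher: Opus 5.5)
Your proof is correct and is the same as the paper's: both multiply the Bézout identities $a_iE+b_ir_i=1$ and observe that every term of the expansion except $\prod_i b_ir_i$ lies in $(E)$. Your inductive alternative, showing that $r\in(E,sr)$ and hence $(E,sr)\supseteq(E,r)=R$, is also valid.
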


\begin{proof}
Choose $u_i,v_i\in R$ with $Eu_i+r_iv_i=1$.  Multiplying these identities,
every term except $\prod_i r_iv_i$ contains a factor $E$.  Thus
\[
  1=EU+\left(\prod_i r_i\right)\left(\prod_i v_i\right)
\]
for some $U\in R$.
\end{proof}

\begin{lemma}[Combining coprime divisors]\label{lem:combine-divisors}
Let $R$ be a commutative ring.  If
\[
  (E,\Delta)=R,
  \qquad E\mid f,
  \qquad \Delta\mid f,
\]
then $E\Delta\mid f$.
\end{lemma}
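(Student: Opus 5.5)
This is the special case $N=2$ of Lemma~\ref{lem:product-divisors}, with $r_1=E$ and $r_2=\Delta$. Comaximality there is exactly the hypothesis $(E,\Delta)=R$, so the lemma applies directly. For a self-contained argument I would simply repeat the two-factor computation.

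Concretely, comaximality lets me choose $u,v\in R$ with $uE+v\Delta=1$, and the divisibility hypotheses let me write $f=Ea=\Delta b$ for some $a,b\in R$. I then multiply $f$ by $1$ and distribute, substituting the factorization of $f$ that produces the product $E\Delta$ in each term. In the term $uEf$ I substitute $f=\Delta b$, and in the term $v\Delta f$ I substitute $f=Ea$. This gives
\[
  f=f(uE+v\Delta)=uE\Delta b+v\Delta Ea=E\Delta(ub+va),
\]
which shows $E\Delta\mid f$.

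Nothing here is a genuine obstacle; commutativity of $R$ is the only structural input used. The one point of care is the crossed substitution: each summand must use the factorization of $f$ by the divisor that does not already appear in it. Otherwise a factor such as $E^2a$ appears instead of $E\Delta$.
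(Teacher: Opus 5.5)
Your proof is correct and is essentially the paper's own argument: write $f=Ea=\Delta b$, choose $u,v$ with $Eu+\Delta v=1$, and expand $f=f(Eu+\Delta v)=E\Delta(bu+av)$ using the crossed substitution. You are also right that this is the case $N=2$ of Lemma~\ref{lem:product-divisors}.
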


\begin{proof}
Write $f=Ea=\Delta b$ and choose $u,v$ with $Eu+\Delta v=1$.  Then
\[
  f=f(Eu+\Delta v)=E\Delta(bu+av).
\]
\end{proof}

Let $m\geq1$, set $R=k[x_1,\ldots,x_m]$, and enumerate
\[
  k^m=\{P_1,\ldots,P_N\},
  \qquad N=q^m.
\]
Write $P_i=(a_{i1},\ldots,a_{im})$ and define
\[
  \delta_i=\prod_{j=1}^m\bigl(1-(x_j-a_{ij})^{q-1}\bigr).
\]
Then $\delta_i(P_j)=1$ when $i=j$ and $0$ otherwise.  Define recursively
\begin{equation}\label{eq:rho-recursion}
  \Pi_0=1,
  \qquad \rho_i=1-\Pi_{i-1}\delta_i,
  \qquad \Pi_i=\Pi_{i-1}\rho_i.
\end{equation}

\begin{lemma}[Separating hypersurfaces]\label{lem:separators}
The polynomials $\rho_1,\ldots,\rho_N$ satisfy
\[
  \rho_i(P_i)=0,
  \qquad \rho_i(P_j)=1\quad(j\neq i),
\]
and are pairwise comaximal.  Consequently their squares are pairwise
comaximal.
\end{lemma}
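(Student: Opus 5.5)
The plan has two parts: first the values of $\rho_i$ on the grid, then comaximality, where the recursion \eqref{eq:rho-recursion} does the real work.

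\textbf{Values on the grid.} By induction on $i$ we show that $\Pi_{i-1}(P_j)=1$ for all $j\geq i$. This holds for $\Pi_0=1$. Assume it for $\Pi_{i-1}$.

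\begin{itemize}
\item At $P_i$ we get $\rho_i(P_i)=1-\Pi_{i-1}(P_i)\delta_i(P_i)=1-1\cdot 1=0$.
\item For any $j\neq i$ we have $\delta_i(P_j)=0$, so $\rho_i(P_j)=1$. This covers $j<i$ as well, with no condition on $\Pi_{i-1}(P_j)$.
\item For $j>i$, $\Pi_i(P_j)=\Pi_{i-1}(P_j)\rho_i(P_j)=1$, which closes the induction.
\end{itemize}

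So the values $\rho_i(P_i)=0$ and $\rho_i(P_j)=1$ for $j\neq i$ follow from the indicator property of the $\delta_i$ alone.

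\textbf{Comaximality.} Take $i<j$. The recursion gives
\[
1=\rho_j+\Pi_{j-1}\delta_j,
\qquad
\Pi_{j-1}=\rho_1\cdots\rho_{j-1}.
\]
Since $i\leq j-1$, $\rho_i$ divides $\Pi_{j-1}$. Hence $1\in(\rho_j,\rho_i)$, so $\rho_i$ and $\rho_j$ are comaximal.

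Comaximality cannot come from the grid values. Those only show that $\rho_i$ and $\rho_j$ have no common zero in $k^m$, and there could still be common zeros over $\overline{k}$. This is why $\rho_j$ is built with the factor $\Pi_{j-1}$: the identity above is an explicit B\'ezout relation.

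\textbf{Squares.} If $(a,b)=R$, raise $1=ua+vb$ to the third power. Every term contains $a^2$ or $b^2$, so $(a^2,b^2)=R$. Equivalently, apply Lemma~\ref{lem:coprime-product} twice: first with $E=a$ and the product $b\cdot b$, then with $E=b^2$ and the product $a\cdot a$.
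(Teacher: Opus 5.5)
Your proposal is correct and follows the paper's proof essentially step for step. The grid values come from the indicator property of $\delta_i$; your induction on $\Pi_{i-1}$ is valid but not needed, since $\rho_j(P_i)=1$ for $j<i$ already follows from $\delta_j(P_i)=0$. Comaximality comes from the explicit identity $1=\rho_j+\Pi_{j-1}\delta_j$ with $\rho_i\mid\Pi_{j-1}$, and the squares are handled by cubing a B\'ezout relation, exactly as in the paper.
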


\begin{proof}
At $P_i$, every preceding $\rho_j$ has value $1$, so
$\Pi_{i-1}(P_i)=1$ and $\rho_i(P_i)=0$.  If $j\neq i$, then
$\delta_i(P_j)=0$, so $\rho_i(P_j)=1$.

If $j<i$, then $\rho_j\mid\Pi_{i-1}$ and
\[
  1=\rho_i+\Pi_{i-1}\delta_i\in(\rho_i,\rho_j).
\]
Thus the $\rho_i$ are pairwise comaximal.  If
$u\rho_i+v\rho_j=1$, expanding $(u\rho_i+v\rho_j)^3$ expresses $1$ as a sum
of a multiple of $\rho_i^2$ and a multiple of $\rho_j^2$; hence the squares
are pairwise comaximal.
\end{proof}

\begin{theorem}[Artin--Schreier interpolation]\label{thm:AS-interpolation}
For arbitrary prescribed values
\[
  \alpha_i,\beta_i\in k
  \qquad(1\leq i\leq N),
\]
there exist $A,B,C\in R$ such that
\[
  A(P_i)=\alpha_i,
  \qquad B(P_i)=\beta_i
\]
for every $i$, and
\begin{equation}\label{eq:AS-divisibility}
  B^q-B=(A^q-A)C.
\end{equation}
This includes the cases in which either prescribed function is constant.
\end{theorem}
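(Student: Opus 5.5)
The plan is to build $A$ first, so that $A^q-A$ factors as $\Pi E$ with $\Pi=\rho_1\cdots\rho_N$ and $E$ comaximal with every $\rho_i$, and then to build $B$ by the Chinese remainder theorem so that both $\Pi$ and $E$ divide $B^q-B$. Throughout I use the additivity of Frobenius: since $c^q=c$ for every $c\in k$,
\[
  A^q-A=(A-c)^q-(A-c)\qquad\text{for every }c\in k,
\]
and similarly for $B$.

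\emph{Step 1 (construction of $A$).} By Lemma~\ref{lem:separators} the squares $\rho_i^2$ are pairwise comaximal. The Chinese remainder theorem therefore gives $A\in R$ with
\[
  A\equiv \alpha_i+\rho_i \pmod{\rho_i^2}\qquad(1\le i\le N).
\]
Since $\rho_i(P_i)=0$, we get $A(P_i)=\alpha_i$. Write $A-\alpha_i=\rho_i(1+\rho_i s_i)$. Then
\[
  A^q-A=(A-\alpha_i)^q-(A-\alpha_i)=\rho_i g_i,
\]
with $g_i\equiv-1\pmod{\rho_i}$ (this uses $q\ge2$). So each $\rho_i$ divides $A^q-A$, and Lemma~\ref{lem:product-divisors} gives $A^q-A=\Pi E$ for some $E\in R$.

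\emph{Step 2 (the cofactor $E$ is comaximal with each $\rho_i$).} Modulo $\rho_i$ we have $E\cdot(\Pi/\rho_i)\equiv g_i\equiv-1$, so $E$ is a unit modulo $\rho_i$, that is, $(E,\rho_i)=R$. This is the step I expect to need the most care. The first-order choice ``$+\rho_i$'' in the congruence for $A$ is exactly what makes $A-\alpha_i$ vanish to order one along $\rho_i$, so that no extra power of $\rho_i$ hides inside $E$. By Lemma~\ref{lem:coprime-product}, $(E,\Pi)=R$ as well.

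\emph{Step 3 (construction of $B$).} The ideals $(E),(\rho_1),\dots,(\rho_N)$ are pairwise comaximal, so the Chinese remainder theorem gives $B\in R$ with
\[
  B\equiv A\pmod E,\qquad B\equiv\beta_i\pmod{\rho_i}\quad(1\le i\le N).
\]
The two kinds of congruence give the two divisibilities:
\begin{itemize}
  \item From $B\equiv A\pmod E$ we get $B^q-B\equiv A^q-A\equiv0\pmod E$.
  \item From $B\equiv\beta_i\pmod{\rho_i}$ we get $B^q-B=(B-\beta_i)^q-(B-\beta_i)\equiv0\pmod{\rho_i}$. Evaluating at $P_i$ also gives $B(P_i)=\beta_i$.
\end{itemize}
Lemma~\ref{lem:product-divisors} then gives $\Pi\mid B^q-B$. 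Since $(E,\Pi)=R$, Lemma~\ref{lem:combine-divisors} gives $A^q-A=E\Pi\mid B^q-B$, which defines $C$.

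Nothing in this argument used non-constancy of the prescribed values, so the constant cases are included. If $E$ happens to be a unit, the congruence modulo $E$ is vacuous and the argument is unchanged.
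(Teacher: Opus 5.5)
Your proof is correct and follows essentially the same route as the paper. Both use the same first-order congruence $A\equiv\alpha_i+\rho_i \pmod{\rho_i^2}$, the same factorization $A^q-A=\bigl(\prod_i\rho_i\bigr)E$ with $E$ comaximal to every $\rho_i$, and a $B$ with $B\equiv\beta_i \pmod{\rho_i}$ and $E\mid B^q-B$. The differences are cosmetic: the paper takes the explicit choice $B=-\sum_i\beta_i(A^q-A)/\rho_i$, which is $\equiv0$ rather than $\equiv A$ modulo $E$, and it separately checks that $A$ is nonconstant (so $A^q-A\neq0$), which your existence argument does not need.
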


\begin{proof}
For each $i$, put
\[
  M_i=\prod_{j\neq i}\rho_j^2.
\]
Since the $\rho_j^2$ are pairwise comaximal,
Lemma~\ref{lem:coprime-product} gives $(M_i,\rho_i^2)=R$.  Choose
$\lambda_i\in R$ with
\[
  \lambda_iM_i\equiv1\pmod{\rho_i^2}.
\]
Define
\begin{equation}\label{eq:A-construction}
  A=\sum_{i=1}^N(\alpha_i+\rho_i)\lambda_iM_i.
\end{equation}
For each $i$,
\begin{equation}\label{eq:A-local}
  A\equiv\alpha_i+\rho_i\pmod{\rho_i^2},
\end{equation}
because every other summand is divisible by $\rho_i^2$.  Hence
$A(P_i)=\alpha_i$.

The construction must keep $A$ nonconstant even when the prescribed first
coordinate is constant.  If the $\alpha_i$ are not all equal, this is
immediate.  If all $\alpha_i$ equal $\alpha$ and $A=\alpha$, then
\eqref{eq:A-local} implies $\rho_i\in(\rho_i^2)$.  Since $R$ is a domain and
$\rho_i\neq0$, this would make $\rho_i$ a unit, contradicting
$\rho_i(P_i)=0$.

Set $D=A^q-A$.  Since $A$ is nonconstant,
\[
  \deg(A^q)=q\deg A>\deg A,
\]
so $D\neq0$.  From \eqref{eq:A-local}, write
$A=\alpha_i+\rho_i+\rho_i^2h_i$.  Using $\alpha_i^q=\alpha_i$ and $q\geq2$
gives
\[
  D\equiv-\rho_i\pmod{\rho_i^2}.
\]
Therefore
\begin{equation}\label{eq:D-local}
  \rho_i\mid D,
  \qquad \frac{D}{\rho_i}\equiv-1\pmod{\rho_i}.
\end{equation}
By Lemma~\ref{lem:product-divisors},
\[
  \Delta:=\prod_{i=1}^N\rho_i
\]
divides $D$.  Write $D=\Delta E$.  Equation~\eqref{eq:D-local} yields
\[
  \left(\frac{\Delta}{\rho_i}\right)E\equiv-1\pmod{\rho_i}.
\]
Because $\Delta/\rho_i$ is a unit modulo $\rho_i$, this congruence shows that
$E$ is a unit modulo $\rho_i$.  Thus $(E,\rho_i)=R$ for every $i$, and
Lemma~\ref{lem:coprime-product} gives
\begin{equation}\label{eq:E-Delta-coprime}
  (E,\Delta)=R.
\end{equation}

Define
\begin{equation}\label{eq:B-construction}
  B=-\sum_{i=1}^N\beta_i\frac{D}{\rho_i}.
\end{equation}
Modulo $\rho_j$, every term with $i\neq j$ vanishes because $D/\rho_i$
contains the factor $\rho_j$.  The remaining term and
\eqref{eq:D-local} give
\[
  B\equiv\beta_j\pmod{\rho_j}.
\]
Thus $B(P_j)=\beta_j$.

Since $B\equiv\beta_i\pmod{\rho_i}$ and $\beta_i^q=\beta_i$, every $\rho_i$
divides $B^q-B$.  Lemma~\ref{lem:product-divisors} gives
\[
  \Delta\mid B^q-B.
\]
Every summand in \eqref{eq:B-construction} is divisible by $E$, so $E\mid B$.
Writing $B=EB_0$ gives
\[
  B^q-B=E\bigl(E^{q-1}B_0^q-B_0\bigr),
\]
so $E\mid B^q-B$.  Finally, \eqref{eq:E-Delta-coprime} and
Lemma~\ref{lem:combine-divisors} imply
\[
  D=E\Delta\mid B^q-B.
\]
Therefore
\[
  C=\frac{B^q-B}{D}\in R,
\]
and \eqref{eq:AS-divisibility} holds.
\end{proof}

\begin{remark}
First-order interpolation controls the simple local factors of $A^q-A$; the
second interpolation then makes $B^q-B$ divisible by all factors of
$A^q-A$.  The recursion \eqref{eq:rho-recursion} is designed for uniform
existence, not degree efficiency, and produces large coordinates even over
small fields.
\end{remark}

\section{Universal polynomial representatives}\label{sec:representatives}

\subsection{Artin--Schreier suspension}

\begin{lemma}[Artin--Schreier suspension]\label{lem:suspension}
Let $R$ be a commutative $k$-algebra, and let $A,B,C,T\in R$ satisfy
\[
  B^q-B=(A^q-A)C.
\]
Set $D=A^q-A$ and define
\begin{equation}\label{eq:suspension}
  F_1=A,
  \qquad F_2=B+D(C-T),
  \qquad F_3=T+D^{q-1}(C-T)^q.
\end{equation}
Then
\begin{equation}\label{eq:suspension-relation}
  F_2^q-F_2=(F_1^q-F_1)F_3.
\end{equation}
\end{lemma}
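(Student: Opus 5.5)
The plan is a direct computation in $R$, using two facts. First, $R$ is a $k$-algebra with $k=\F_q$, so $R$ has characteristic $p$. Hence the $q$-power map $x\mapsto x^q$ is additive on $R$ and multiplicative, and it fixes every element of $k$. Second, the hypothesis $B^q-B=DC$ holds. Write $S=C-T$ to shorten the formulas.

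I would first expand the left side of \eqref{eq:suspension-relation}. Additivity and multiplicativity of Frobenius give
\[
  F_2^q=B^q+D^qS^q.
\]
Therefore
\[
  F_2^q-F_2=(B^q-B)+D^qS^q-DS.
\]
Substituting the hypothesis $B^q-B=DC$ turns this into
\[
  F_2^q-F_2=DC+D^qS^q-DS=D\bigl(C-S+D^{q-1}S^q\bigr).
\]

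I would then simplify the bracket. Since $C-S=T$, the bracket equals $T+D^{q-1}S^q$, which is exactly $F_3$. Also $F_1^q-F_1=A^q-A=D$. Together these give
\[
  F_2^q-F_2=(F_1^q-F_1)F_3,
\]
which is \eqref{eq:suspension-relation}.

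No step is a real obstacle. The one point that needs care is justifying additivity of the $q$-power map in an arbitrary commutative $k$-algebra. It holds because $p=0$ in $R$ and $q$ is a power of $p$, so $(x+y)^q=x^q+y^q$ by iterating the binomial identity for $p$th powers. With that in hand, the argument is a two-line factorization. Note also that $q\ge2$ ensures $D^{q-1}$ is an honest nonnegative power.
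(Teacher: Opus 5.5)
Your proposal is correct and is essentially the paper's proof: expand $F_2^q$ using additivity and multiplicativity of the $q$th-power map, substitute $B^q-B=DC$, and factor out $D$ so that the remaining bracket is exactly $F_3$. Your added justification that $q$th powers are additive in any commutative $k$-algebra, because $p\cdot1_R=0$, supplies the one detail the paper states without comment.
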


\begin{proof}
The $q$th-power map is a ring homomorphism.  Hence
\begin{align*}
  F_2^q-F_2
   &=B^q-B+D^q(C-T)^q-D(C-T)\\
   &=DC+D^q(C-T)^q-DC+DT\\
   &=D\bigl(T+D^{q-1}(C-T)^q\bigr)\\
   &=(F_1^q-F_1)F_3.
\end{align*}
\end{proof}

\subsection{Dependent representatives}

\begin{theorem}[Universal constrained representatives]\label{thm:dependent-representatives}
Let $m\geq1$ and $n\geq3$, and let
\[
  \varphi=(\varphi_1,\ldots,\varphi_n):k^m\longrightarrow k^n
\]
be any set map.  There is a polynomial representative
\[
  F=(F_1,\ldots,F_n)\in R_m^n
\]
of $\varphi$ satisfying
\begin{equation}\label{eq:universal-relation}
  F_2^q-F_2=(F_1^q-F_1)F_3.
\end{equation}
Consequently its coordinates are algebraically dependent.
\end{theorem}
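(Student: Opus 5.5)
The plan is to combine the Artin--Schreier interpolation theorem with the suspension lemma, using $R=R_m$. First apply Theorem~\ref{thm:AS-interpolation} with $\alpha_i=\varphi_1(P_i)$ and $\beta_i=\varphi_2(P_i)$. This produces $A,B,C\in R_m$ with $A(P_i)=\alpha_i$, $B(P_i)=\beta_i$ and $B^q-B=(A^q-A)C$. By the construction in that proof, $D=A^q-A$ is a nonzero polynomial.

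Next, take $T\in R_m$ to be any polynomial representative of $\varphi_3$, for instance $T=\sum_i\varphi_3(P_i)\delta_i$ by Lemma~\ref{lem:grid}. Define $F_1,F_2,F_3$ by \eqref{eq:suspension}, and for $j\geq4$ let $F_j$ be any representative of $\varphi_j$. Lemma~\ref{lem:suspension}, applied in $R=R_m$, gives \eqref{eq:universal-relation} directly.

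To check that $F$ represents $\varphi$, evaluate at each $P\in k^m$. Since $A(P)\in k$, we have $D(P)=A(P)^q-A(P)=0$. Hence $F_1(P)=A(P)=\varphi_1(P)$, $F_2(P)=B(P)=\varphi_2(P)$, and $F_3(P)=T(P)=\varphi_3(P)$. The term $D^{q-1}(C-T)^q$ vanishes at $P$ because $q-1\geq1$.

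The remaining point, and the only mildly delicate one, is that \eqref{eq:universal-relation} really expresses algebraic dependence. Set $H=U_2^q-U_2-(U_1^q-U_1)U_3\in k[U_1,\ldots,U_n]$. This is a nonzero polynomial, since it contains the monomial $U_2^q$, which no other term cancels. By \eqref{eq:universal-relation}, $H(F_1,\ldots,F_n)=0$, so $\theta_F$ has nonzero kernel and the coordinates are algebraically dependent. No nondegeneracy of $F$ is needed for this, because the relation is a nonzero polynomial identity in any case. The hypothesis $n\geq3$ is used only to have three coordinate slots available for the suspension.
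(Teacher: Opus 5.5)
Your proposal is correct and matches the paper's proof: Artin--Schreier interpolation for the first two coordinates, the suspension with $T$ a grid representative of $\varphi_3$, the vanishing of $D=A^q-A$ at every point of $k^m$, and the nonzero annihilator $U_2^q-U_2-(U_1^q-U_1)U_3$. Your remark that $D\neq0$ is harmless but not needed here, since the annihilating polynomial is nonzero regardless.
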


\begin{proof}
Enumerate $k^m=\{P_1,\ldots,P_N\}$.  Apply
Theorem~\ref{thm:AS-interpolation} in $R_m$ to the prescribed values
\[
  \alpha_i=\varphi_1(P_i),
  \qquad \beta_i=\varphi_2(P_i).
\]
This gives $A,B,C\in R_m$ with
\[
  A(P_i)=\alpha_i,
  \qquad B(P_i)=\beta_i,
  \qquad B^q-B=(A^q-A)C.
\]
For $3\leq j\leq n$, choose a polynomial representative $T_j$ of
$\varphi_j$, for example
\[
  T_j=\sum_{i=1}^N\varphi_j(P_i)\delta_{P_i}.
\]
Put $D=A^q-A$ and define
\[
  F_1=A,
  \qquad F_2=B+D(C-T_3),
  \qquad F_3=T_3+D^{q-1}(C-T_3)^q,
\]
and $F_j=T_j$ for $j\geq4$.  Lemma~\ref{lem:suspension} gives
\eqref{eq:universal-relation}.

At every $P_i$,
\[
  D(P_i)=\alpha_i^q-\alpha_i=0,
\]
so $F(P_i)=\varphi(P_i)$.  The nonzero polynomial
\[
  U_2^q-U_2-(U_1^q-U_1)U_3
  \in k[U_1,\ldots,U_n]
\]
annihilates the coordinates of $F$.
\end{proof}

\begin{corollary}[Counterexamples over every finite field]\label{cor:all-fields}
For every finite field $k$ and every $n\geq3$, the identity map of $k^n$ has
a polynomial representative with algebraically dependent coordinates
satisfying \eqref{eq:universal-relation}.  Thus the statement in
Conjecture~\ref{conj:MW} is false in every dimension at least three.
\end{corollary}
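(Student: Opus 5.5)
This corollary is a direct specialization of Theorem~\ref{thm:dependent-representatives}. Take $m=n\geq3$ and let $\varphi=\id:k^n\to k^n$, with components $\varphi_j(P)=a_j$ for $P=(a_1,\ldots,a_n)$. The theorem has no hypothesis on $\varphi$ beyond being a set map, so it applies. It yields $F\in R_n^n$ with $F(P)=P$ for every $P\in k^n$, satisfying \eqref{eq:universal-relation}.

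The remaining step is to confirm that this relation really is a nontrivial algebraic dependence. The polynomial
\[
  H=U_2^q-U_2-(U_1^q-U_1)U_3\in k[U_1,\ldots,U_n]
\]
is nonzero, since the monomial $U_2^q$ occurs in it with coefficient $1$ and no other term can cancel it. By \eqref{eq:universal-relation} we have $H(F_1,\ldots,F_n)=0$. Hence $\theta_F$ is not injective, and the coordinates are algebraically dependent. The theorem already records this.

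Finally, the identity is a bijection of $k^n$, so $F$ is a polynomial tuple inducing a bijection whose coordinates are dependent. This contradicts the conclusion of Conjecture~\ref{conj:MW} for this $n$ and this $k$. Since $k$ and $n\geq3$ were arbitrary, the conjecture fails in every such case.

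There is no substantial obstacle here. All the work lies in Theorem~\ref{thm:AS-interpolation} and Lemma~\ref{lem:suspension}, and the only point to check is that the annihilating polynomial is nonzero. As a sanity check against Proposition~\ref{prop:prime-kernel}, $H$ has total degree $q$ and lies in $(U_1^q-U_1,\ldots,U_n^q-U_n)$, so the two are consistent.
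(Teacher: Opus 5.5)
Your proposal is correct and is the paper's proof: the paper also just applies Theorem~\ref{thm:dependent-representatives} with $m=n$ and $\varphi=\id_{k^n}$. One small slip in your closing sanity check: $H$ has total degree $q+1$ (from the term $U_1^qU_3$), not $q$, which is still consistent with Proposition~\ref{prop:prime-kernel} and does not affect the argument.
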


\begin{proof}
Apply Theorem~\ref{thm:dependent-representatives} with $m=n$ and
$\varphi=\id_{k^n}$.
\end{proof}

\subsection{Independent representatives}

\begin{theorem}[Universal independent representatives]\label{thm:independent-representatives}
Let $m,n\geq1$, and let $\varphi:k^m\to k^n$ be a set map.  Then $\varphi$
has an algebraically independent polynomial representative if and only if
$n\leq m$.
\end{theorem}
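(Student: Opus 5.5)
The "only if" direction is a transcendence-degree count.  If $n>m$, then any $n$ elements of $R_m$ lie in a field of transcendence degree $m$ over $k$.  They are therefore algebraically dependent, so no representative whatsoever can be independent.  This direction uses nothing about $\varphi$.

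For the converse, assume $n\leq m$.  The plan is to start from an arbitrary representative and perturb it by elements of $I_{q,m}$, which do not change the induced finite map, until algebraic independence can be certified.  Concretely, fix any representative $(T_1,\ldots,T_n)$, for instance the $q$-reduced one from Lemma~\ref{lem:grid}.  Choose a large integer $M$ and set
\[
  F_j=T_j+\bigl(x_j^q-x_j\bigr)^{M}\qquad(1\leq j\leq n).
\]
Each $F_j$ still represents $\varphi_j$, since the added term vanishes on $k^m$.

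To prove independence, I would use a leading-term argument under a monomial order; the paper's introduction alludes to the "elementary leading-term construction".  Take $M$ so large that $qM>\deg T_j$ for every $j$, and use a term order such as graded lex.  Then $\LT(F_j)=x_j^{qM}$, because this pure power has strictly larger total degree than every other monomial of $F_j$.  The leading terms are powers of distinct variables, so monomials $F^{e}=\prod F_j^{e_j}$ with distinct exponent vectors $e$ have distinct leading monomials $\prod x_j^{qMe_j}$.  Now suppose $H=\sum_e c_eU^e$ is nonzero.  Among the monomials of $H$, one exponent $e$ gives the largest leading monomial, and it cannot cancel in $H(F_1,\ldots,F_n)$.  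Hence $H(F)\neq0$.

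Alternatively, one could appeal to Lemma~\ref{lem:jacobian-rank}.  There the Jacobian minor on $x_1,\ldots,x_n$ has diagonal contributions $M(x_j^q-x_j)^{M-1}(-1)$, and $M$ would be chosen prime to $p$.  However, controlling the off-diagonal terms is messier, so I will prefer the leading-term route.

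The only delicate point is making sure the chosen leading terms really dominate.  This is why I use total degree: taking $qM$ larger than every $\deg T_j$ avoids any interaction with the $T_j$.  The cases $n<m$ and constant $\varphi_j$ need no special treatment, since the construction never used nonconstancy of the $T_j$.
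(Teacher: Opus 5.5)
Your proposal is correct and follows the paper's own proof: the "only if" direction is the transcendence-degree count $\trdeg_k R_m=m$, and for $n\leq m$ you add $(x_j^q-x_j)^M$ with $qM$ larger than every $\deg T_j$. The paper likewise uses a total-degree-refining monomial order, so that $\LT(F_j)=x_j^{qM}$, and concludes that the maximal leading monomial among the $F^e$ with $c_e\neq0$ cannot cancel.
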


\begin{proof}
If $n>m$, no $n$ elements of $R_m$ can be algebraically independent because
$\trdeg_k R_m=m$.

Assume $n\leq m$.  Choose any polynomial representative
\[
  f=(f_1,\ldots,f_n)
\]
of $\varphi$.  Let $d\geq0$ bound the total degrees of the nonzero $f_i$,
with $d=0$ if all coordinates vanish.  Choose $N\geq1$ such that $qN>d$ and
define
\[
  G_i=f_i+(x_i^q-x_i)^N,
  \qquad 1\leq i\leq n.
\]
The correction terms vanish on $k^m$, so $G=(G_1,\ldots,G_n)$ represents
$\varphi$.

Fix a monomial order refining total degree.  Since $qN>d$,
\[
  \LT(G_i)=x_i^{qN}.
\]
For $a=(a_1,\ldots,a_n)\in\mathbb N^n$,
\[
  \LT(G_1^{a_1}\cdots G_n^{a_n})
  =x_1^{qNa_1}\cdots x_n^{qNa_n}.
\]
Distinct exponent vectors give distinct leading monomials.  If
\[
  H(U_1,\ldots,U_n)=\sum_a c_aU^a
\]
is nonzero, choose an exponent $a$ for which $c_a\neq0$ and the displayed
leading monomial of $G^a$ is maximal.  It occurs exactly once in
$H(G_1,\ldots,G_n)$ and cannot cancel.  Hence $H(G_1,\ldots,G_n)\neq0$, so
the coordinates are algebraically independent.
\end{proof}

\subsection{The sharp square-dimensional dichotomy}

\begin{corollary}[Sharp representative dichotomy]\label{cor:dichotomy}
Let $\varphi:k^n\to k^n$ be a set map.
\begin{enumerate}
  \item If $n\geq3$, $\varphi$ has both an algebraically independent and an
  algebraically dependent polynomial representative.
  \item If $n=1$ or $n=2$ and $\varphi$ is bijective, every polynomial
  representative of $\varphi$ has algebraically independent coordinates.
\end{enumerate}
Consequently Theorem~\ref{thm:main} holds.
\end{corollary}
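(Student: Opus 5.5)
The corollary follows by assembling results already proved; no new construction is needed.

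For part (1), fix $n\geq3$ and a set map $\varphi:k^n\to k^n$. Theorem~\ref{thm:independent-representatives} with $m=n$ gives an algebraically independent representative, since $n\leq n$. Theorem~\ref{thm:dependent-representatives} with $m=n\geq1$ and $n\geq3$ gives a representative satisfying \eqref{eq:universal-relation}. This representative is dependent because the polynomial $U_2^q-U_2-(U_1^q-U_1)U_3$ is nonzero and annihilates its coordinates.

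For part (2), take $n=1$: a representative $f\in k[x]$ of a bijection is covered by Theorem~\ref{thm:dimension-one}. Take $n=2$: a bijection is in particular injective, so Theorem~\ref{thm:dimension-two} applies to every representative $(f,g)$.

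It remains to derive Theorem~\ref{thm:main}.
\begin{itemize}
\item Item (1) of the theorem is exactly Theorem~\ref{thm:independent-representatives}. The ``if and only if'' is read for a fixed $\varphi$; the claim ``every map'' holds precisely when $n\leq m$, since for $n>m$ no map has an independent representative because $\trdeg_k R_m=m$.
\item Item (2) is Theorem~\ref{thm:dependent-representatives}.
\item Item (3) is parts (1) and (2) of the present corollary.
\end{itemize}

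The only point needing care is the quantifier in item (1) of Theorem~\ref{thm:main}. For $n>m$ the statement ``every map has an independent representative'' fails because no map has one, and $k^m$ is nonempty, so such maps exist. For $n\leq m$ every map has one. This makes the biconditional hold in the form stated.
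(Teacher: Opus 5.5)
Your proposal is correct and follows the paper's proof exactly: you combine Theorems~\ref{thm:dependent-representatives} and~\ref{thm:independent-representatives} (with $m=n$) for part (1), and Theorems~\ref{thm:dimension-one} and~\ref{thm:dimension-two} for part (2). You also spell out the quantifier reading of item (1) of Theorem~\ref{thm:main}, which the paper leaves implicit; that explicit step is correct.
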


\begin{proof}
The first assertion follows from Theorems~\ref{thm:dependent-representatives}
and~\ref{thm:independent-representatives}.  The second follows from
Theorems~\ref{thm:dimension-one} and~\ref{thm:dimension-two}.
\end{proof}

\begin{corollary}[Ring-theoretic form]\label{cor:ring-form}
Let $A_{q,n}=R_n/I_{q,n}$.  Every $k$-algebra automorphism of $A_{q,n}$ has
an injective endomorphism lift to $R_n$.  If $n\geq3$, it also has a
noninjective endomorphism lift.  If $n\leq2$, every endomorphism lift of an
automorphism of $A_{q,n}$ is injective.
\end{corollary}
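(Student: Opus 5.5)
The plan is to translate Corollary~\ref{cor:ring-form} into the language of polynomial representatives using Proposition~\ref{prop:lifting} and Lemma~\ref{lem:grid}, and then read off each assertion from Corollary~\ref{cor:dichotomy}.

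The dictionary comes first. Let $\sigma$ be a $k$-algebra automorphism of $A_{q,n}$. By Lemma~\ref{lem:grid}(3), $A_{q,n}\cong\Fun(k^n,k)$. Every $k$-algebra endomorphism of this finite product of copies of $k$ is pullback along a unique set map $\varphi:k^n\to k^n$: one sees this by looking at where the primitive idempotents $\delta_P$ are sent, or at the maximal ideals. Since $\sigma$ is invertible, $\varphi$ is a bijection.

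An endomorphism lift of $\sigma$ is a $k$-algebra map $\Theta:R_n\to R_n$ that preserves $I_{q,n}$ and induces $\sigma$ on the quotient. Such a map is determined by the tuple $F_i=\Theta(x_i)$, so $\Theta=\Theta_F$. The condition that $\Theta_F$ induces $\sigma=\varphi^\ast$ says precisely that $F_i\equiv x_i\circ\varphi$ modulo $I_{q,n}$ for every $i$. Evaluating at points, this is the condition that $F$ is a polynomial representative of $\varphi$. Conversely, every representative $F$ of $\varphi$ gives a lift $\Theta_F$, since $\Theta_F$ always preserves $I_{q,n}$ and the induced map $F^\ast$ agrees with $\varphi^\ast$ on the generators $x_i$. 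So lifts of $\sigma$ correspond bijectively to representatives of $\varphi$.

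The remaining steps are immediate. By Proposition~\ref{prop:lifting}(2), $\Theta_F$ is injective exactly when $F_1,\ldots,F_n$ are algebraically independent. Corollary~\ref{cor:dichotomy}(1), or Theorem~\ref{thm:independent-representatives} with $m=n$ (which covers every $n$), then supplies an injective lift. For $n\geq3$, Theorem~\ref{thm:dependent-representatives} supplies a noninjective lift. For $n\le2$, every lift is injective by Corollary~\ref{cor:dichotomy}(2), since $\varphi$ is bijective.

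The only step requiring care is the identification of automorphisms of $A_{q,n}$ with pullbacks along bijections. I would argue it via idempotents. The algebra $\Fun(k^n,k)$ has exactly the primitive idempotents $\delta_P$. A $k$-algebra endomorphism $\psi$ sends $1$ to $1$ and orthogonal idempotents to orthogonal idempotents. Composing $\psi$ with evaluation at a point $Q$ gives a $k$-algebra map $\Fun(k^n,k)\to k$, and any such map is evaluation at a unique point $\varphi(Q)$. Hence $\psi=\varphi^\ast$. Bijectivity of $\varphi$ when $\psi$ is an automorphism then follows from Proposition~\ref{prop:lifting}(1).
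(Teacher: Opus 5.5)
Your proposal is correct and follows the paper's proof. Like the paper, you identify each automorphism of $A_{q,n}\cong\Fun(k^n,k)$ with pullback along a permutation of $k^n$, match endomorphism lifts with polynomial representatives via Proposition~\ref{prop:lifting}, and read off the three assertions from Theorems~\ref{thm:independent-representatives} and~\ref{thm:dependent-representatives} and Corollary~\ref{cor:dichotomy}; your explicit lift--representative bijection and your character-based definition of $\varphi$ spell out what the paper's short proof leaves implicit, and they avoid its contravariance caveat.
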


\begin{proof}
Under
\[
  A_{q,n}\cong\Fun(k^n,k)\cong k^{q^n},
\]
a $k$-algebra automorphism permutes the primitive idempotents and is therefore
pullback along a unique permutation of $k^n$.  With the convention that
pullback by $\sigma$ is $f\mapsto f\circ\sigma$, this may be the inverse of
the permutation induced on primitive idempotents; the contravariance does not
affect the existence statements.  Apply Proposition~\ref{prop:lifting} and
Corollary~\ref{cor:dichotomy}.
\end{proof}

\begin{definition}\label{def:scheme-image}
Let $F=(F_1,\ldots,F_n):\A_k^m\to\A_k^n$ be a polynomial map, and let
\[
  \theta_F:k[U_1,\ldots,U_n]\longrightarrow R_m,
  \qquad U_i\longmapsto F_i.
\]
The \emph{scheme-theoretic image} of $F$ is the closed subscheme of $\A_k^n$
defined by $\ker\theta_F$.  Its coordinate ring is
\[
  k[U_1,\ldots,U_n]/\ker\theta_F
  \cong k[F_1,\ldots,F_n].
\]
\end{definition}

\section{An identity representative with exact image}\label{sec:surface}

Apply Theorem~\ref{thm:AS-interpolation} in $k[x,y]$ with prescribed values
\[
  A(a,b)=a,
  \qquad B(a,b)=b
  \qquad((a,b)\in k^2).
\]
Write
\[
  D=A^q-A,
  \qquad B^q-B=DC.
\]
In $k[x,y,z]$, define
\begin{equation}\label{eq:identity-representative}
  F_1=A,
  \qquad F_2=B+D(C-z),
  \qquad F_3=z+D^{q-1}(C-z)^q.
\end{equation}

\begin{theorem}[Exact scheme-theoretic image]\label{thm:exact-surface}
The map $F=(F_1,F_2,F_3)$ in
\eqref{eq:identity-representative} has the following properties.
\begin{enumerate}
  \item $F(a,b,c)=(a,b,c)$ for every $(a,b,c)\in k^3$.
  \item The coordinate kernel is exactly
  \[
    (\Psi_q),
    \qquad
    \Psi_q(U,V,W)=V^q-V-(U^q-U)W.
  \]
  \item The scheme-theoretic image is the hypersurface
  \[
    S_q:\quad V^q-V=(U^q-U)W.
  \]
  It is smooth, geometrically integral, and $k$-rational.
  \item As subsets of the ambient rational-point set,
  \[
    S_q(k)=\A_k^3(k).
  \]
\end{enumerate}
\end{theorem}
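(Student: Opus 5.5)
The plan is to get the easy items (1), (3), (4) from earlier results and direct calculation, and to reduce item (2) to a transcendence-degree count plus irreducibility of $\Psi_q$.

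\textbf{Item (1).} This is the specialization of Theorem~\ref{thm:dependent-representatives} with $m=n=3$, $\varphi=\id$ and $T_3=z$. At $(a,b,c)\in k^3$ we have $A(a,b)=a$, so $D(a,b)=a^q-a=0$. Hence $F_1=a$, $F_2=B(a,b)=b$ and $F_3=c$.

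\textbf{Item (2).} Lemma~\ref{lem:suspension} shows $\Psi_q\in\ker\theta_F$. For the reverse inclusion:
\begin{itemize}
\item $\ker\theta_F$ is prime, since $R_3$ is a domain.
\item By item (1) the map $F$ is injective on $k^3$. Corollary~\ref{cor:trdeg-one} then gives $\trdeg_k k[F_1,F_2,F_3]\geq2$.
\item The nonzero relation $\Psi_q$ gives $\trdeg_k k[F_1,F_2,F_3]\le 2$, so the transcendence degree is exactly $2$.
\item Therefore $\ker\theta_F$ is a prime of height one in the UFD $k[U,V,W]$, so it is principal, generated by an irreducible polynomial dividing $\Psi_q$.
\end{itemize}
It remains to show $\Psi_q$ is irreducible, and this is the one step needing care. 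Regard $\Psi_q$ as a polynomial of degree one in $W$ over $k[U,V]$. It is primitive: its two coefficients $U^q-U$ and $V^q-V$ involve disjoint variables, each is nonconstant, and neither has a repeated factor. Hence they share no common factor, even over $\overline k$. By Gauss's lemma, a primitive polynomial of degree one in $W$ is irreducible. The same argument over $\overline k$ gives absolute irreducibility. Consequently $\ker\theta_F=(\Psi_q)$.

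\textbf{Item (3).} By Definition~\ref{def:scheme-image}, the scheme-theoretic image is $S_q=V(\Psi_q)$. Absolute irreducibility from the previous step gives geometric integrality.

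For smoothness, compute
\[
\partial\Psi_q/\partial V=qV^{q-1}-1=-1.
\]
This is a unit, so the Jacobian criterion shows $S_q$ is smooth everywhere.

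For rationality, on the function field of $S_q$ we have
\[
W=\frac{V^q-V}{U^q-U}.
\]
Hence $k(S_q)=k(U,V)$, which is purely transcendental, so $S_q$ is $k$-rational. (Alternatively, the open set $U^q\ne U$ is isomorphic to an open subset of $\A^2$.)

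\textbf{Item (4).} For $(a,b,c)\in k^3$ we have $a^q=a$ and $b^q=b$, so both sides of the defining equation of $S_q$ vanish. Thus $S_q(k)=k^3$. This also follows from item (1) together with $F(k^3)\subset S_q(k)$.
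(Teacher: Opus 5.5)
Your proof is correct. Items (1), (3), (4) and the irreducibility of $\Psi_q$ follow the paper essentially verbatim. The genuine difference is how you obtain the lower bound $\trdeg_k k[F_1,F_2,F_3]\geq2$.

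The paper argues directly from the shape of the representative. $F_1=A$ does not involve $z$, and $F_2=B+DC-Dz$ is linear in $z$ with coefficient $-D\neq0$. So in any relation $H(F_1,F_2)=0$ the top $z$-coefficient $h_s(A)(-D)^s$ is nonzero. This exhibits $F_1,F_2$ as an explicit transcendence basis with $F_3\in k(F_1,F_2)$.

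You instead invoke Corollary~\ref{cor:trdeg-one}, using only that $F$ is injective on $k^3$ by item (1). Your route is shorter and never uses the specific form of $F$, so it proves more. Any representative of an injective map $k^m\to k^3$ with $m\geq2$ that satisfies $\Psi_q(F)=0$ has coordinate kernel exactly $(\Psi_q)$. In particular this holds for every output of Theorem~\ref{thm:dependent-representatives} applied to such a map. The price is that your argument rests on the normalization, L\"uroth and genus-zero machinery of Section~\ref{sec:low-dimensional}, whereas the paper's step is completely elementary and self-contained.

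Your closing step is an equally valid substitute for the paper's comparison of two height-one primes via a chain argument. You note that a height-one prime in the UFD $k[U,V,W]$ is principal, generated by an irreducible factor of $\Psi_q$.

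One small inaccuracy: item (1) is not literally the specialization of Theorem~\ref{thm:dependent-representatives} with $m=3$ and $T_3=z$. Here $A,B,C$ are interpolated in $k[x,y]$ over $k^2$, not in $k[x,y,z]$ over $k^3$. Your direct evaluation at $(a,b,c)$ is what actually proves the item, and it is correct.
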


\begin{proof}
At a rational point $(a,b,c)$,
\[
  A(a,b)=a,
  \qquad B(a,b)=b,
  \qquad D(a,b)=a^q-a=0,
\]
so $F(a,b,c)=(a,b,c)$.  Lemma~\ref{lem:suspension} gives
\begin{equation}\label{eq:surface-relation}
  F_2^q-F_2=(F_1^q-F_1)F_3.
\end{equation}
Thus $\Psi_q$ belongs to the coordinate kernel.

We first prove that $F_1,F_2$ are algebraically independent.  The polynomial
$A$ is nonconstant because it assumes every first-coordinate value on $k^2$;
hence $D\neq0$.  Moreover,
\[
  F_2=B+DC-Dz
\]
is linear in $z$ with nonzero leading coefficient $-D$.  Suppose
\[
  H(F_1,F_2)=0,
  \qquad H(U,V)=\sum_{j=0}^s h_j(U)V^j,
  \qquad h_s\neq0.
\]
If $s>0$, the coefficient of $z^s$ in $H(A,F_2)$ is
\[
  h_s(A)(-D)^s,
\]
which is nonzero because a nonconstant polynomial $A$ is transcendental over
$k$.  If $s=0$, then $h_0(A)\neq0$ for the same reason.  Thus $F_1,F_2$ are
algebraically independent.

Since $F_1^q-F_1=D\neq0$, relation~\eqref{eq:surface-relation} gives
\[
  F_3=\frac{F_2^q-F_2}{F_1^q-F_1}\in k(F_1,F_2).
\]
Therefore the image coordinate algebra has transcendence degree at most two;
independence of $F_1,F_2$ gives transcendence degree exactly two.  Since the
coordinate kernel is prime, the dimension formula gives
\[
  \operatorname{ht}\ker\theta_F
  =3-\trdeg_k k[F_1,F_2,F_3]=1.
\]

We next prove geometric irreducibility of $\Psi_q$.  Let $\ell/k$ be any field
extension and regard $\Psi_q$ as a polynomial in $W$ over $\ell[U,V]$.  The
coefficients $-(U^q-U)$ and $V^q-V$ are coprime.  Indeed, every nonconstant
irreducible divisor of the first coefficient lies in $\ell[U]$, while every
nonconstant irreducible divisor of the second lies in $\ell[V]$; a common
irreducible divisor would therefore belong to
\[
  \ell[U]\cap\ell[V]=\ell,
\]
which is impossible.  Thus $\Psi_q$ is primitive in $\ell[U,V][W]$.  Over
$\ell(U,V)$ it is a nonconstant polynomial of degree one in $W$, and hence
irreducible.  Gauss's lemma gives irreducibility in $\ell[U,V,W]$.  Therefore
$(\Psi_q)$ is a height-one prime contained in the coordinate kernel.  The
containment cannot be strict: together with the zero prime, a strict
containment would give a chain of two proper prime inclusions and force the
coordinate kernel to have height at least two.  Hence the two height-one
primes are equal.

Since
\[
  \frac{\partial\Psi_q}{\partial V}=-1,
\]
the hypersurface is smooth.  On the dense open set $U^q-U\neq0$,
\[
  W=\frac{V^q-V}{U^q-U},
\]
so its function field is $k(U,V)$ and the surface is $k$-rational.  Finally,
for every $(u,v,w)\in k^3$,
\[
  u^q-u=v^q-v=0,
\]
so every ambient rational point lies on $S_q$.
\end{proof}

\begin{remark}
Theorem~\ref{thm:exact-surface} exhibits a smooth geometrically integral
strict closed affine surface containing every rational point of its ambient
affine space.  The dimension-two proof succeeds not by naive dimension
counting, but because a normalized image curve dominated by affine space is
rational and has at most $q+1$ rational points.
\end{remark}

\section{Further criteria forcing algebraic independence}\label{sec:further-criteria}

\subsection{Low degree over a large field}

\begin{theorem}[Low-degree/large-field criterion]\label{thm:degree-criterion}
Let
\[
  F=(F_1,\ldots,F_n)\in k[x_1,\ldots,x_n]^n
\]
be surjective on $k^n$.  Assume that
\[
  \deg F_i\leq d
  \qquad(1\leq i\leq n),
\]
where degree means total degree and $d\geq1$.  If
\[
  q>d^{n-1},
\]
then $F_1,\ldots,F_n$ are algebraically independent.
\end{theorem}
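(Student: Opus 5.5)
We argue by contradiction, combining the prime-kernel constraint of Proposition~\ref{prop:prime-kernel} with the effective annihilator bound of Beecken, Mittmann, and Saxena \cite[Corollary~6]{BeeckenMittmannSaxena2013}. Suppose $F_1,\ldots,F_n$ are algebraically dependent. Then $\ker\theta_F\neq0$, and its transcendence degree is $r:=\trdeg_k k[F_1,\ldots,F_n]\leq n-1$. The cited bound states that $n$ polynomials of degree at most $d$ with transcendence degree $r$ admit a nonzero annihilating polynomial of total degree at most $d^{r}$. Since $r\leq n-1$, this gives a nonzero annihilator $H\in\ker\theta_F$ with
\[
  \deg H\leq d^{r}\leq d^{n-1}.
\]
Here we use $d\geq1$; the case $d=1$ is harmless, since $d^r=1$.

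Surjectivity of $F$ on $k^n$ brings in Proposition~\ref{prop:prime-kernel}. Every nonzero element of $\ker\theta_F$ vanishes on all of $k^n$, so it lies in $(U_1^q-U_1,\ldots,U_n^q-U_n)$ and has total degree at least $q$. Combining the two bounds gives $q\leq\deg H\leq d^{n-1}$, which contradicts the hypothesis $q>d^{n-1}$. Hence the coordinates are algebraically independent.

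The main obstacle is matching the hypotheses of the cited corollary correctly. Its statement is over arbitrary fields; we must confirm it applies in positive characteristic, and check whether the bound is $d^r$, or a variant such as $\prod$ of degrees over a subset of the $F_i$, or $d^{r}$ times a factor, when the polynomials are not of equal degree. If the bound is phrased in terms of a transcendence basis among the $F_i$, we first choose $r$ coordinates forming a basis of $k(F)$. We then apply the bound to those $r$ coordinates together with one extra dependent coordinate, which yields an annihilator of degree at most $d^r$ in $r+1$ variables; this is still nonzero in $\ker\theta_F$, so the argument above is unchanged. We must also rule out $r=0$ separately. Then every $F_i$ is constant, which contradicts surjectivity since $q^n>1$.
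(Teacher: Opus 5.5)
Your proposal is correct and is essentially the paper's proof: assume dependence, use \cite[Corollary~6]{BeeckenMittmannSaxena2013} to get a nonzero annihilator of total degree at most $d^r\leq d^{n-1}<q$, then use surjectivity to conclude that it vanishes on $k^n$, which is impossible for a nonzero polynomial of degree below $q$. The only difference is that you cite Proposition~\ref{prop:prime-kernel} where the paper applies Lemma~\ref{lem:grid} directly, which is the same content. Your separate treatment of $r=0$ and the transcendence-basis fallback are harmless but not needed, since the bound $d^0=1<q$ already covers $r=0$.
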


\begin{proof}
Assume that the coordinates are dependent and put
\[
  r=\trdeg_k k[F_1,\ldots,F_n]\leq n-1.
\]
The effective annihilator bound of Beecken, Mittmann, and Saxena applies
over an arbitrary field: for a list of polynomials of maximum total degree
$d$ and transcendence degree $r$, algebraic dependence yields a nonzero
annihilator of total degree at most $d^r$
\cite[Corollary~6]{BeeckenMittmannSaxena2013}.  Hence there is a nonzero
polynomial
\[
  H\in k[U_1,\ldots,U_n]
\]
such that
\[
  H(F_1,\ldots,F_n)=0,
  \qquad \deg H\leq d^r\leq d^{n-1}<q.
\]
Surjectivity implies that $H$ vanishes at every point of $k^n$.  Every
individual variable degree of $H$ is less than $q$, so
Lemma~\ref{lem:grid} forces $H=0$, a contradiction.
\end{proof}

\begin{corollary}[Necessary degree of a dependent permutation representative]\label{cor:degree-lower}
Let $n\geq2$.  If a polynomial representative of a permutation of $k^n$ has
algebraically dependent coordinates of maximum total degree $d$, then
\[
  d^{n-1}\geq q
  \qquad\text{and hence}\qquad
  d\geq\left\lceil q^{1/(n-1)}\right\rceil.
\]
In dimension three,
\[
  d\geq\lceil\sqrt q\rceil.
\]
\end{corollary}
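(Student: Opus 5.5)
This is the contrapositive of Theorem~\ref{thm:degree-criterion}. A permutation of $k^n$ is in particular surjective on $k^n$, so the hypotheses of that theorem hold for any representative $F$ with coordinate total degrees at most $d$. The plan is to dispose of $d=0$ separately, apply the theorem for $d\geq1$, and then rewrite the resulting inequality as a lower bound on $d$.

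First, $d\geq1$. If every coordinate were constant, the induced map would have a single point as image. Since $n\geq2$ and $\#k^n=q^n>1$, this contradicts surjectivity. So some coordinate is nonconstant and $d\geq1$, which places us in the setting of Theorem~\ref{thm:degree-criterion}.

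Second, suppose $q>d^{n-1}$. Theorem~\ref{thm:degree-criterion} would then make $F_1,\ldots,F_n$ algebraically independent, contradicting the assumed dependence. Hence $d^{n-1}\geq q$.

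Third, take positive real $(n-1)$th roots, using $n-1\geq1$, to get $d\geq q^{1/(n-1)}$. Because $d$ is an integer, this gives $d\geq\lceil q^{1/(n-1)}\rceil$. Setting $n=3$ yields $d^2\geq q$, that is, $d\geq\lceil\sqrt q\rceil$.

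No step is a real obstacle. The only points to check are that $d\geq1$, as required in the theorem's hypothesis, and that passing from $d\geq q^{1/(n-1)}$ to the ceiling is valid because $d$ is an integer.
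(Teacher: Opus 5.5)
Your proof is correct and matches the paper's, which simply says the corollary is the contrapositive of Theorem~\ref{thm:degree-criterion}. Your explicit checks, that $d\geq1$ holds as that theorem requires and that the integrality of $d$ justifies passing to the ceiling, are details the paper leaves implicit, and you handle both correctly.
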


\begin{proof}
This is the contrapositive of Theorem~\ref{thm:degree-criterion}.
\end{proof}

\begin{remark}
The strict inequality in Theorem~\ref{thm:degree-criterion} is essential to
that argument.  The equality case $q=d^{n-1}$ is not decided by the theorem.
\end{remark}

\subsection{Surjectivity over unbounded extensions}

\begin{theorem}[Surjectivity over unbounded extensions]\label{thm:extensions}
Let
\[
  F\in\F_q[x_1,\ldots,x_n]^n.
\]
Suppose that $F$ is surjective on $K^n$ for finite extensions $K/\F_q$ of
unbounded cardinality.  The extensions need not form a nested tower.  Then
the coordinates of $F$ are algebraically independent over $\F_q$.
\end{theorem}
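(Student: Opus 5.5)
We argue by contradiction, reusing the mechanism of Theorem~\ref{thm:degree-criterion} but with the field size growing while the polynomials stay fixed. Suppose the coordinates $F_1,\ldots,F_n$ are algebraically dependent over $\F_q$, and choose a nonzero annihilator
\[
  H\in\F_q[U_1,\ldots,U_n],
  \qquad H(F_1,\ldots,F_n)=0,
  \qquad D:=\deg H\geq1.
\]
The degree $D$ could be bounded via \cite[Corollary~6]{BeeckenMittmannSaxena2013}, but this is unnecessary. The only point that matters is that $H$ and $D$ are fixed once and for all, independent of any extension field.

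Now take a finite extension $K/\F_q$ over which $F$ is surjective on $K^n$ and with $\#K>D$; the hypothesis of unbounded cardinality supplies such a $K$. No nesting of the extensions is needed, because we use a single $K$. Regard $H$ as a polynomial in $K[U_1,\ldots,U_n]$. For each $b\in K^n$, surjectivity gives $a\in K^n$ with $F(a)=b$, so
\[
  H(b)=H(F(a))=0,
\]
where the identity $H(F_1,\ldots,F_n)=0$ holds in $\F_q[x]\subset K[x]$ and may therefore be evaluated at $K$-points. Thus $H$ vanishes on all of $K^n$.

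Since $\deg H=D<\#K$, every individual variable degree of $H$ is less than $\#K$. By Lemma~\ref{lem:grid}, applied over the finite field $K$ in the $U$-variables (its proof works verbatim for any finite field), $H$ is the zero polynomial. This contradicts the choice of $H$.

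We do not expect a serious obstacle. The one point needing care is that algebraic dependence over $\F_q$ gives an annihilator with coefficients in $\F_q$, and this remains an annihilator of the same total degree when viewed over $K$. That fixed degree is exactly what allows the grid vanishing argument to apply once $\#K$ exceeds it.
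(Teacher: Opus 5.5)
Your proposal is correct and matches the paper's proof: a fixed nonzero annihilator $H$ vanishes on $K^n$ by surjectivity, and choosing a single given extension with $\#K$ larger than the degrees of $H$ lets the finite-grid argument of Lemma~\ref{lem:grid} over $K$ force $H=0$. The only cosmetic difference is that you bound $\#K$ by the total degree of $H$, whereas the paper uses $\max_i\deg_{U_i}H$; either bound works here.
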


\begin{proof}
Suppose that a nonzero
\[
  H\in\F_q[U_1,\ldots,U_n]
\]
satisfies $H(F)=0$.  Surjectivity on $K^n$ implies that $H$ vanishes on all of
$K^n$.  Choose one of the given extensions with
\[
  |K|>\max_i\deg_{U_i}H.
\]
The finite-grid argument of Lemma~\ref{lem:grid}, now over $K$, gives $H=0$,
a contradiction.
\end{proof}

\section{Reduced representatives and open problems}\label{sec:open}

Theorem~\ref{thm:dependent-representatives} shows that algebraic independence
is not a property of a finite-set map when arbitrary representatives are
allowed.  A natural possible repair is to select the canonical reduced
representative.  We first settle the smallest three-dimensional case.

\subsection{The reduced \texorpdfstring{$\F_2^3$}{F2 cubed} case}

\begin{proposition}[Computer-assisted reduced case]\label{prop:reduced-f2}
Let $F=(F_1,F_2,F_3)$ be the unique $2$-reduced representative of a
permutation of $\F_2^3$.  Then $F_1,F_2,F_3$ are algebraically independent
over $\F_2$.
\end{proposition}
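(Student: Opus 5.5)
The plan follows the computation the introduction announces: enumerate all $8!=40{,}320$ permutations of $\F_2^3$, and for each one form the unique $2$-reduced representative. By Lemma~\ref{lem:grid} this is obtained by Möbius inversion over $\F_2$. Writing $\sigma=(\sigma_1,\sigma_2,\sigma_3)$, the coefficient of the multilinear monomial $x^S$ in $F_i$ is $\sum_{T\subseteq S}\sigma_i(\mathbf 1_T)$. For each representative, compute the formal Jacobian determinant $\det\Jac(F)\in\F_2[x,y,z]$ exactly. Whenever it is a nonzero polynomial, Corollary~\ref{cor:keller}, via Lemma~\ref{lem:jacobian-rank} over the perfect field $\F_2$, gives algebraic independence. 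According to the introduction this settles $38{,}976$ cases and leaves $1{,}344$ permutations with vanishing Jacobian determinant. Example~\ref{ex:zero-jacobian} shows that such permutations really do occur.

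For the residual $1{,}344$ cases, first observe that postcomposition with an affine bijection $L(u)=Mu+c$, with $M\in GL_3(\F_2)$, preserves $2$-reducedness. Indeed $L\circ F$ is coordinatewise an $\F_2$-linear combination of the $F_i$ plus constants, so it is still multilinear and is therefore the reduced representative of $L\circ\sigma$. It also preserves algebraic independence, since $\F_2[L\circ F]=\F_2[F]$. The orbit has size at most $|AGL_3(\F_2)|=8\cdot168=1344$. The plan is to verify by computer that the $1{,}344$ exceptional permutations are exactly the orbit $\{L\circ\sigma_0\}$ of a single permutation. The natural choice for $\sigma_0$ is the transposition $(0,1,1)\leftrightarrow(1,0,0)$ of Example~\ref{ex:zero-jacobian}, whose reduced representative is precisely $(s,\,s+x+y,\,s+x+z)$, because those polynomials are already multilinear. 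That example proves its coordinates are algebraically independent by exhibiting $x,y,z$ as algebraic over $\F_2(A,B,C)$.

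The verification has two parts. The computer checks that the $1{,}344$ maps $L\circ\sigma_0$ are pairwise distinct, so the stabilizer is trivial. It also checks that each of them has zero Jacobian determinant. This second check holds by itself, since $\Jac(L\circ F)=M\Jac(F)$, but the program should still confirm it. Because the exceptional set has exactly $1{,}344$ elements, equality of the two sets follows. Independence then transfers along the orbit, which completes the proof.

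The main obstacle is trustworthiness rather than mathematics: the argument depends on an exact exhaustive enumeration. I would make the code deterministic and use exact arithmetic over $\F_2$. I would represent polynomials as bitmasks over monomials, expanding the determinant symbolically without reducing modulo $x^2-x$, since the formal Jacobian is required. The code should record the counts $38{,}976+1{,}344=40{,}320$ and the orbit check. One subtle point is that the Jacobian must be computed from the actual reduced polynomials and not their function classes, because reduction modulo $I_{2,3}$ does not commute with differentiation.
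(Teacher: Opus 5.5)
Your proposal is correct and follows the paper's proof essentially verbatim. Both arguments apply the nonzero-Jacobian test to all $8!$ reduced triples and then identify the $1{,}344$ zero-Jacobian cases as a single affine-postcomposition orbit of an explicitly independent triple. You replace the paper's direct set comparison with containment plus the count $|AGL_3(\F_2)|=1{,}344$; your pairwise-distinctness check is automatic, since $L\circ\sigma_0=L'\circ\sigma_0$ with $\sigma_0$ bijective forces $L=L'$. Your representative $E=(s,s+x+y,s+x+z)$ from Example~\ref{ex:zero-jacobian} lies in the same orbit as the paper's $G=(xy+xz+yz,\,y+z,\,x+z+1)$, because $G=(E_1,\,E_2+E_3,\,E_1+E_3+1)$, so the two verifications coincide.
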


\begin{proof}
By Lemma~\ref{lem:grid}, each coordinate function on $\F_2^3$ has a unique
multilinear algebraic normal form.  The accompanying exact-check script
enumerates all $8!=40{,}320$ permutations and computes the formal Jacobian
determinant of each reduced polynomial triple in $\F_2[x,y,z]$, without
imposing the grid relations $x^2=x$, $y^2=y$, or $z^2=z$.  It gives
\[
\begin{array}{c|r}
\text{reduced permutation representatives} & 40{,}320\\
\text{nonzero formal Jacobian determinant} & 38{,}976\\
\text{zero formal Jacobian determinant} & 1{,}344.
\end{array}
\]
The first $38{,}976$ triples are algebraically independent by
Corollary~\ref{cor:keller}.

The same exact enumeration verifies that the remaining $1{,}344$ triples are
precisely the orbit, under invertible affine postcomposition, of
\[
  G=(A,B,C),
  \qquad
  A=xy+xz+yz,
  \quad B=y+z,
  \quad C=x+z+1.
\]
The script compares the two sets directly.  Affine linear combinations of the
multilinear coordinates remain $2$-reduced, so equality of the induced
permutation sets is equality of the reduced polynomial triples, not merely
equality of cardinalities.  In $\F_2[x,y,z]$ one has
\[
  x=C+z+1,
  \qquad y=B+z,
  \qquad z^2+A+BC+B=0.
\]
On $\F_2^3$ the last identity recovers $z=A+BC+B$, and the first two then
recover $x$ and $y$, so $G$ induces a permutation.  Globally, the same
identities show that $\F_2(x,y,z)$ is algebraic over $\F_2(A,B,C)$.  Hence
\[
  \trdeg_{\F_2}\F_2(A,B,C)=3,
\]
so $A,B,C$ are algebraically independent.  Invertible affine
postcomposition preserves algebraic independence.  It therefore handles
every zero-Jacobian case and completes the exhaustive proof.
\end{proof}

\begin{remark}[Reproducibility]\label{rem:reduced-reproducibility}
The supplementary file
\path{Paper_A_f2_reduced_check.py} uses only the Python standard library and
exact arithmetic over $\F_2$.  It converts truth tables to
algebraic normal form by the Boolean M\"obius transform, represents formal
polynomials as sparse exponent sets, differentiates and multiplies them
without Boolean reduction, enumerates every permutation exactly once, and
compares the zero-Jacobian set with all $168\cdot8=1{,}344$ invertible affine
postcompositions of $G$.  The exact console output is supplied as
\path{Paper_A_f2_reduced_check_output.txt}.  Checksums and the execution
command are recorded in \path{Paper_A_supplement_README.md} and
\path{Paper_A_reproducibility_manifest.sha256}.  No computation is used
in the proofs of Theorems~\ref{thm:main},~\ref{thm:AS-interpolation},
or~\ref{thm:exact-surface}.
\end{remark}

\subsection{Remaining problems}

\begin{openproblem}[Reduced-representative variant]\label{problem:reduced}
Let $F=(F_1,\ldots,F_n)$ be the unique $q$-reduced representative of a
permutation of $\F_q^n$.  Must $F_1,\ldots,F_n$ be algebraically independent?
\end{openproblem}

The answer is affirmative for $n=1,2$, because
Theorems~\ref{thm:dimension-one} and~\ref{thm:dimension-two} apply to every
representative, and Proposition~\ref{prop:reduced-f2} gives an affirmative
answer for $(q,n)=(2,3)$.  Apart from this case, the present paper leaves the
question open for $n\geq3$.  Ring-theoretically, one must construct or rule
out a $q$-reduced permutation representative whose coordinate kernel is a
positive-height prime contained in
\[
  (U_1^q-U_1,\ldots,U_n^q-U_n).
\]
Reduction modulo the grid ideal can destroy the exact relation satisfied by a
nonreduced representative, so the universal construction does not settle the
question.

\begin{openproblem}[Minimum degree]\label{problem:min-degree}
For $n\geq3$, determine
\[
  d_{q,n}=\min_F\max_i\deg F_i,
\]
where $F$ ranges over polynomial representatives of permutations of $\F_q^n$
with algebraically dependent coordinates.
\end{openproblem}

Corollary~\ref{cor:degree-lower} gives
\[
  d_{q,n}\geq\left\lceil q^{1/(n-1)}\right\rceil.
\]
Proposition~\ref{prop:compact-f2} below gives $d_{2,3}\leq14$, but no minimum
is determined here.

\begin{openproblem}[Compact formulas for $q>2$]
Find substantially lower-degree closed-form representatives over $\F_q$,
$q>2$, preferably with exact image surface
\[
  V^q-V=(U^q-U)W.
\]
The general separator construction proves existence but is not
degree-efficient.
\end{openproblem}

\begin{openproblem}[Prime relation kernels]
Classify the nonzero prime ideals
\[
  P\subset(U_1^q-U_1,\ldots,U_n^q-U_n)
\]
that occur as coordinate kernels of polynomial representatives of
permutations of $\F_q^n$.
\end{openproblem}

\begin{openproblem}[Descent]
Let $q=p^e$ with $e>1$.  Determine when a dependent representative of a
permutation of $\F_q^n$ can be chosen with coefficients in a proper subfield,
especially in $\F_p$.
\end{openproblem}

\section{Conclusion}\label{sec:conclusion}

A polynomial tuple over a finite field carries two different kinds of data:
its values on the finite rational-point grid and the global geometry of the
associated morphism.  Theorem~\ref{thm:main} gives a sharp square-dimensional
dichotomy.  In dimensions one and two, every representative of a permutation
is dominant.  In every dimension at least three, each finite-set map has both
a dominant and a nondominant representative.

The nondominant representatives arise from a reusable construction rather
than an isolated example.  Artin--Schreier interpolation produces prescribed
values with $A^q-A\mid B^q-B$, and the suspension converts that divisibility
into the fixed relation
\[
  F_2^q-F_2=(F_1^q-F_1)F_3.
\]
For the identity on $k^3$, the relation defines the exact scheme-theoretic
image, a smooth geometrically integral rational surface containing every
ambient $k$-point.

Several positive restrictions remain: nonzero formal Jacobian determinant,
low coordinate degree relative to $q$, and surjectivity over extensions of
unbounded size each force algebraic independence.  In the canonical reduced
setting, Proposition~\ref{prop:reduced-f2} gives an exact computer-assisted
proof for permutations of $\F_2^3$.  The general reduced-representative
question remains Open Problem~\ref{problem:reduced}.

\appendix

\section{A compact characteristic-two representative}\label{sec:compact}

The uniform construction has large degree.  We now give a smaller explicit
formula over $\F_2$.  In $\F_2[x,y,z]$, define
\begin{align*}
  r&=xy+1,
  &s&=xy+y+1,
  &\theta&=x^2+x+1,\\
  a&=x+1+s\theta,
  &b&=x+r\theta,
  &\mu&=rs,\\
  \nu&=ab,
  &u&=\theta^2+\nu z,
  &v&=1+\mu z,
\end{align*}
and set
\begin{equation}\label{eq:compact-map}
  F_1=ra,
  \qquad F_2=\mu u+1,
  \qquad F_3=uv+1.
\end{equation}

\begin{proposition}[Compact characteristic-two representative]\label{prop:compact-f2}
The map $F=(F_1,F_2,F_3)$ in \eqref{eq:compact-map} has the following
properties.
\begin{enumerate}
  \item On $\F_2^3$ it induces the affine transvection
  \[
    (x,y,z)\longmapsto(x+y,y,z).
  \]
  \item Its coordinates satisfy
  \[
    F_2(F_2+1)=F_1(F_1+1)(F_3+1).
  \]
  \item Its coordinate degrees are
  \[
    \deg F_1=6,
    \qquad \deg F_2=13,
    \qquad \deg F_3=14.
  \]
  \item Its coordinate kernel is generated by
  \[
    \Phi(X,Y,Z)=Y(Y+1)+X(X+1)(Z+1).
  \]
  Consequently its scheme-theoretic image is a smooth geometrically integral
  rational surface.
  \item After every field extension of $\F_2$, the formal Jacobian has rank
  exactly two at every geometric point.
\end{enumerate}
\end{proposition}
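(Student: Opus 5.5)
The plan is to reduce everything to two short polynomial identities in $\F_2[x,y]$, which turn the map into an instance of the suspension mechanism of Lemma~\ref{lem:suspension}. The first identity is
\[
  r(x+1)+sx=1 .
\]
It holds because $r(x+1)+1=x^2y+xy+x=sx$. From it I expect to get
\[
  ra+1=sb
  \qquad\text{and}\qquad
  \nu=ab=1+rs\theta^2=1+\mu\theta^2 .
\]
For the first of these, expand $ra+1=r(x+1)+1+rs\theta$ and $sb=sx+rs\theta$, then use the identity. For the second, expand
\[
  ab=x(x+1)+\bigl(r(x+1)+sx\bigr)\theta+rs\theta^2
\]
and use $x^2+x+\theta=1$.

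\textbf{Step 1: the relation (2).} Multiplying $ra+1=sb$ by $ra$ gives $F_1(F_1+1)=ra\cdot sb=\mu\nu$. Write $F_2=\mu u+1$, so that $F_2(F_2+1)=\mu u(\mu u+1)$. Using $\nu=\mu\theta^2+1$, I check directly that $\mu u+1=\nu v$; indeed both sides equal $\mu\theta^2+1+\mu\nu z$. Hence
\[
  F_2(F_2+1)=\mu\nu\,uv=F_1(F_1+1)(F_3+1),
\]
which is (2).

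\textbf{Step 2: the grid values (1).} On $\F_2^3$ we have $\theta=1$. I then tabulate the four points $(x,y)\in\F_2^2$, computing $r,s,a,b$, then $\mu,\nu$, then $u=1+\nu z$ and $v=1+\mu z$. The check is that $F_1=ra$ equals $x+y$, $F_2$ equals $y$, and $F_3$ equals $z$; this is a finite table.

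\textbf{Step 3: the degrees (3).} These come from leading forms. We have $\deg r=\deg s=2$, $\deg\theta=2$, and $\deg a=\deg b=4$ with leading forms $x^3y$ and $x^3y$. This gives $\deg F_1=6$, $\deg\mu=4$, $\deg\nu=8$ and $\deg u=9$. Hence $\deg F_2=13$ (leading term $\mu\nu z$) and $\deg F_3=14$ (leading term $\mu\nu z^2$). The only thing to confirm is that the top forms do not cancel in characteristic two.

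\textbf{Step 4: the kernel (4).} I would copy the proof of Theorem~\ref{thm:exact-surface}. $F_1$ lies in $\F_2[x,y]$ and is nonconstant. $F_2$ is linear in $z$ with coefficient $\mu\nu\neq0$, so $F_1,F_2$ are algebraically independent by the same leading-coefficient-in-$z$ argument. Since $F_1(F_1+1)\neq0$, we get $F_3\in k(F_1,F_2)$, so the transcendence degree is $2$ and the kernel is a height-one prime.

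Over any extension field $\ell$, $\Phi$ is linear in $Z$. Its coefficients $X(X+1)$ and $Y(Y+1)+X(X+1)$ are coprime: a common factor would divide $Y(Y+1)$ and $X(X+1)$, hence lie in $\ell[X]\cap\ell[Y]=\ell$. So $\Phi$ is irreducible by Gauss's lemma, and height considerations give kernel $=(\Phi)$.

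Smoothness follows from $\partial\Phi/\partial Y=2Y+1=1$. Rationality follows by solving for $Z$ on $X(X+1)\neq0$.

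\textbf{Step 5: Jacobian rank (5).} This is the main obstacle. Rank is at most $2$ by Lemma~\ref{lem:jacobian-rank}, so what must be shown is that the $2\times2$ minors have no common zero over $\overline{\F}_2$.

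I would first use $\partial F_1/\partial z=0$ and $\partial F_2/\partial z=\mu\nu$. This gives minors $\mu\nu\,\partial F_1/\partial x$ and $\mu\nu\,\partial F_1/\partial y$. It therefore suffices to handle two loci separately:
\begin{enumerate}
  \item the locus where $\nabla F_1=0$ in $\F_2[x,y]$;
  \item the curve $\mu\nu=rsab=0$.
\end{enumerate}
On each locus I would bring in the minors involving $\partial F_3/\partial z=\nu v+\mu u$ and the $x$-derivatives of $F_2$. I would then verify the emptiness of the common zero set by resultants or a Gr\"obner basis computation, exploiting the factorizations $F_1(F_1+1)=rsab$ and $ra+1=sb$ to split into the four components $r=0$, $s=0$, $a=0$, $b=0$. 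I expect this case analysis to be where most care is needed.
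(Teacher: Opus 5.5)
Your items (1)--(4) are correct. The identities $r(x+1)+sx=1$, $ra+1=sb$, $\nu=1+\mu\theta^2$ and $\mu u+1=\nu v$ are exactly the ones the paper uses. Your linear-in-$z$ argument for the independence of $F_1,F_2$ is a valid alternative to the paper's route, which gets independence of $F_1,F_3$ from a Jacobian minor via Lemma~\ref{lem:jacobian-rank}. The upper bound in (5) is also fine: generic rank below $3$ for a $3\times3$ matrix means $\det\Jac(F)=0$ as a polynomial, hence at every geometric point.

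The gap is the lower bound in (5). You reduce it to showing that the $2\times2$ minors have no common zero over $\overline{\F}_2$. You then only announce a resultant or Gr\"obner-basis case analysis over the locus $\nabla F_1=0$ and the components $r=0$, $s=0$, $a=0$, $b=0$. None of this is carried out, so as written (5) is not proved.

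The missing observation is that one $2\times2$ minor is identically $1$, which makes the case analysis unnecessary. You already showed $\mu u+1=\nu v$, i.e.\ $\mu u+\nu v=1$ in characteristic two. This is precisely $\partial F_3/\partial z=\nu v+\mu u$; you wrote that expression down without noticing that it equals $1$. Likewise, write $F_1=r(x+1)+rs\theta$ and use $\partial_y r=x$, $\partial_y s=x+1$, $\partial_y\theta=0$. Your first identity then gives
\[
  \frac{\partial F_1}{\partial y}
  =x(x+1)+\bigl(xs+r(x+1)\bigr)\theta
  =x(x+1)+\theta=1 .
\]
Since $\partial F_1/\partial z=0$, the minor of the rows $F_1,F_3$ in the columns $y,z$ equals $1\cdot1-0=1$. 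It stays $1$ after any base change, so the rank is at least two at every geometric point; in particular your locus $\nabla F_1=0$ is empty. This is the paper's argument, and the same constant minor also gives the independence needed in (4).
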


\begin{proof}
In characteristic two,
\[
  r(x+1)+sx=1.
\]
Therefore
\[
  ra+sb=r(x+1)+sx+rs\theta+sr\theta=1.
\]
Also
\begin{align*}
  \nu=ab
   &=(x+1+s\theta)(x+r\theta)\\
   &=x(x+1)+\theta\bigl(r(x+1)+sx\bigr)+rs\theta^2\\
   &=1+\mu\theta^2.
\end{align*}
Hence
\[
  \mu\theta^2+\nu=1
\]
and
\[
  \mu u+\nu v
  =\mu(\theta^2+\nu z)+\nu(1+\mu z)=1.
\]
It follows that
\[
  F_1+1=sb,
  \qquad F_2+1=\mu u,
  \qquad F_2=\nu v,
  \qquad F_3+1=uv.
\]
Multiplication gives
\[
  F_2(F_2+1)
  =(\nu v)(\mu u)
  =(ab)(rs)uv
  =(ra)(sb)uv
  =F_1(F_1+1)(F_3+1).
\]

Reduce modulo $(x^2+x,y^2+y,z^2+z)$.  Then
\[
  \theta=1,
  \qquad a=xy+x+y,
  \qquad \mu=y+1,
  \qquad \nu=y,
\]
and
\[
  u=1+yz,
  \qquad v=1+(y+1)z.
\]
Substitution gives
\[
  F_1=x+y,
  \qquad F_2=y,
  \qquad F_3=z
\]
as functions on $\F_2^3$.

The highest-degree terms of
\[
  r,s,\theta,a,b,\mu,\nu,u,v
\]
are respectively
\[
  xy,\quad xy,\quad x^2,\quad x^3y,\quad x^3y,\quad x^2y^2,
  \quad x^6y^2,\quad x^6y^2z,\quad x^2y^2z.
\]
Thus the highest-degree terms of $F_1,F_2,F_3$ are
\[
  x^4y^2,
  \qquad x^8y^4z,
  \qquad x^8y^4z^2,
\]
which proves the degree assertions.

We compute a constant Jacobian minor.  Since
\[
  F_1=r(x+1)+rs\theta,
\]
we have
\begin{align*}
  \frac{\partial F_1}{\partial y}
   &=x(x+1)+\bigl(xs+r(x+1)\bigr)\theta\\
   &=x(x+1)+\theta=1,
\end{align*}
because $xs+r(x+1)=1$.  Also
\[
  \frac{\partial F_1}{\partial z}=0,
  \qquad
  \frac{\partial F_3}{\partial z}=\nu v+\mu u=1.
\]
Hence the $(F_1,F_3)$ minor in the variables $(y,z)$ equals $1$.
Lemma~\ref{lem:jacobian-rank} implies that $F_1,F_3$ are algebraically
independent.

The displayed relation shows that the coordinate algebra has transcendence
degree at most two, while independence of $F_1,F_3$ gives transcendence degree
exactly two.  If
\[
  \vartheta:\F_2[X,Y,Z]\longrightarrow\F_2[x,y,z]
\]
is the coordinate substitution map, then
\[
  \operatorname{ht}\ker\vartheta
  =3-\trdeg_{\F_2}\F_2[F_1,F_2,F_3]=1.
\]
Thus the coordinate kernel is a height-one prime containing $\Phi$.

Let $L/\F_2$ be any field extension and view $\Phi$ as a polynomial in $Z$
over $L[X,Y]$.  Its coefficients
\[
  X(X+1)
  \quad\text{and}\quad
  Y(Y+1)+X(X+1)
\]
are coprime, because a common factor would divide both $X(X+1)$ and
$Y(Y+1)$.  Hence $\Phi$ is primitive in $L[X,Y][Z]$.  It has degree one in
$Z$ over $L(X,Y)$ and is irreducible there; Gauss's lemma gives irreducibility
in $L[X,Y,Z]$.  Thus $\Phi$ is geometrically irreducible and $(\Phi)$ is a
height-one prime.  Since it is contained in the height-one coordinate kernel,
a strict containment would force the latter to have height at least two.
Therefore the coordinate kernel equals $(\Phi)$.

Smoothness follows from
\[
  \frac{\partial\Phi}{\partial Y}=1,
\]
and rationality follows by solving for $Z$ on $X(X+1)\neq0$.  Differentiating
the relation gives a polynomial dependence among the three Jacobian rows
whose coefficient on the $F_2$ row is $1$, so after every scalar extension
the Jacobian rank is at most two at every geometric point.  The constant minor
remains equal to $1$ and proves that the rank is at least two everywhere.
\end{proof}

\begin{proposition}[Failure of literal scalar transport]\label{prop:compact-extension}
Let $k=\F_{2^e}$ with $e>1$, and interpret the formula
\eqref{eq:compact-map} over $k$.  The resulting map is not bijective on
$k^3$.
\end{proposition}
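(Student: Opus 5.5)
The plan is to use the relation in Proposition~\ref{prop:compact-f2}(2) directly. The proof of that relation uses only the identities $r(x+1)+sx=1$, $ra+sb=1$, $\nu=1+\mu\theta^2$ and $\mu u+\nu v=1$. All of these are formal consequences of the definitions in any commutative ring of characteristic two. So they remain valid verbatim in $k[x,y,z]$ for $k=\F_{2^e}$. Hence the literal map $F$ over $k$ still satisfies
\[
  F_2(F_2+1)=F_1(F_1+1)(F_3+1),
\]
that is, $\Phi(F_1,F_2,F_3)=0$ identically. Consequently
\[
  F(k^3)\subseteq\{(X,Y,Z)\in k^3:\Phi(X,Y,Z)=0\}.
\]
I will show that this set is a proper subset of $k^3$ as soon as $|k|>2$. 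Since $k^3$ is finite, a non-surjective self-map of $k^3$ is not bijective, which proves the statement.

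To exhibit a missed point, I use $e>1$ to choose $X_0\in k\setminus\F_2$, so that $X_0(X_0+1)\neq0$. Fix any $Y_0\in k$. Along the line $X=X_0$, $Y=Y_0$, the equation $\Phi=0$ is linear in $Z$ with nonzero leading coefficient $X_0(X_0+1)$. It therefore has the unique solution
\[
  Z=1+\frac{Y_0(Y_0+1)}{X_0(X_0+1)}.
\]
Any other value $Z_0\in k$ gives a point $(X_0,Y_0,Z_0)\notin F(k^3)$. For a concrete witness I can take $Y_0=0$, so the forced value is $Z=1$, and choose $Z_0=0$. Then $\Phi(X_0,0,0)=X_0(X_0+1)\neq0$.

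The only point needing care is that the relation really transports to $k$. This is the step I would verify explicitly, by rereading the computation in the proof of Proposition~\ref{prop:compact-f2}. It uses no reduction modulo $x^2+x$ and no special property of $\F_2$ beyond characteristic two, so there is no genuine obstacle. The contrast with $\F_2$ is also explained: there $X(X+1)=0$ for every $X$, so $\Phi$ vanishes on all of $\F_2^3$ and no point is excluded.
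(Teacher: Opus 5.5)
Your proposal is correct and follows essentially the same route as the paper: the characteristic-two identities transport verbatim to $k$, so $F(k^3)$ lies in the surface $Y(Y+1)=X(X+1)(Z+1)$, and this surface does not contain all of $k^3$. The only difference is that you exhibit an explicit omitted point $(X_0,0,0)$ with $X_0\notin\F_2$, whereas the paper counts $\#S(k)=q^2+2q<q^3$; either suffices.
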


\begin{proof}
The polynomial relation remains valid, so the image lies in
\[
  S:\quad Y(Y+1)=X(X+1)(Z+1).
\]
If $X\in\{0,1\}$, then $Y\in\{0,1\}$ and $Z$ is arbitrary, giving $4q$
points.  If $X\notin\{0,1\}$, then for every $Y$ there is a unique $Z$, giving
$q(q-2)$ points.  Therefore
\[
  \#S(k)=4q+q(q-2)=q^2+2q<q^3
\]
for $q>2$.  The map cannot be surjective.
\end{proof}

\begin{remark}
The compact formula relies on the cancellation $2=0$.  The uniform
construction in Sections~\ref{sec:interpolation}--\ref{sec:surface} does not:
for each finite field it is rebuilt using $T^q-T$.  It is not generally
obtained by scalar extension of a representative over a smaller field.
\end{remark}

\section*{Declaration of generative AI and AI-assisted technologies}
During preparation of this work, the authors used ChatGPT for exploratory
mathematical brainstorming, manuscript organization and language editing,
LaTeX assistance, literature-search support, and computational checking.
The exact exhaustive computation used in
Proposition~\ref{prop:reduced-f2} is documented by the supplementary source,
output, execution instructions, and checksums identified in
Remark~\ref{rem:reduced-reproducibility}.  The authors reviewed and
independently verified all material retained in the article and take full
responsibility for its content.

\bibliographystyle{amsplain}
\bibliography{Paper_A_final}

\end{document}